\definecolor{rouge}{rgb}{0.7,0.00,0.00}
\definecolor{vert}{rgb}{0.00,0.5,0.00}
\definecolor{bleu}{rgb}{0.00,0.00,0.8}
\newtheorem{theorem}{Theorem}[section]
\newtheorem*{theorem*}{Theorem}
\newtheorem{proposition}[theorem]{Proposition}
\newtheorem{hypoP}{P\kern-0.1mm}
\newtheorem{lemme}[theorem]{Lemme}
\theoremstyle{definition}
\numberwithin{equation}{section}
\newcommand*{\abs}[1]{\left\lvert#1\right\rvert}
\newcommand*{\norm}[1]{\left\lVert#1\right\rVert}
\newcommand*{\scal}[2]{\left\langle {#1}, {#2} \right\rangle}
\def\bb#1{\mathbb{#1}}
\def\bs#1{\boldsymbol{#1}}
\def\bf#1{\mathbf{#1}}
\def\scr#1{\mathscr{#1}}
\def\bbm#1{\mathbbm{#1}}
\def\geq{\geqslant}
\def\leq{\leqslant}
\def\tt#1{\tilde{#1}}
\def\tbs#1{\tilde{\boldsymbol{#1}}}
\newcommand\ee{\varepsilon}
\renewcommand\ll{\lambda}
\DeclareMathOperator{\GL}{GL}
\DeclareMathOperator{\dd}{d\!}
\DeclareMathOperator{\e}{e}
\DeclareMathOperator{\Var}{Var}
\DeclareMathOperator{\Cov}{Cov}
\DeclareMathOperator{\Id}{Id}
\DeclareMathOperator{\supp}{supp}
\def\l({\left(}
\def\r){\right)}
\begin{document}

\title[Espace de Banach pour le produit de matrices]{Construction d'un espace de Banach pour le produit de matrices aléatoires}
\author{Ion Grama, Ronan Lauvergnat, Emile Le Page}

\date{\today}

\maketitle

Le but de cet article est de montrer que les Théorèmes 2.2-2.5 de l'article \cite{grama_limit_2018-1}
s'appliquent au produit de matrices aléatoires considéré par Grama, Le Page et Peigné \cite{grama_conditioned_2016}. 
Cela nous permet en particulier d'insister sur l'aspect général que revêt la formulation de nos théorèmes en \cite{grama_limit_2018-1} 
en montrant que nos hypothèses sont vérifiées pour les modèles antérieurs.

Il est important de souligner que l'espace de Banach qui est construit dans \cite{grama_conditioned_2016} n'est pas entièrement adapté à nos Hypothèses M1-M5. 
En effet le point délicat est le fait que dans l'Hypothèse M4 on suppose que la fonction $f$ est bornée par une fonction de l'espace de Banach. Or dans l'article \cite{grama_conditioned_2016}, l'espace de Banach est inclus dans les fonctions bornées tandis que la fonction $f=\rho$ n'est pas bornée. Il nous faut donc modifier l'espace donné dans \cite{grama_conditioned_2016} et le remplacer par un nouvel espace de Banach que l'on introduit ci-dessous. Commençons par réintroduire des notations de \cite{grama_conditioned_2016}.

\subsection{Rappel des notations}

Soit $\bb G = \GL_d(\bb R)$ l'ensemble des matrices inversibles de taille $d\times d$, $d\geq 1$. On munit $\bb R^d$ de sa norme euclidienne, $\norm{v}= \sqrt{\sum_{i=1}^d v_i^1}$, pour tout $v=(v_1,\dots,v_d) \in \bb R^d$ et $\bb G$ de la norme d'opérateur associée, $\norm{g} = \sup_{v \in \bb R^d \setminus \{0\}} \norm{gv}/\norm{v}$, pour tout $g \in \bb G$. On note également $\bb P (\bb R^d)$ l'espace projectif associé et pour tout $v \in \bb R^d$, on désignera par $\overline{v} \in \bb P(\bb R^d)$ sa direction. On munit $\bb P (\bb R^d)$ de la distance angulaire $d(\overline{u},\overline{v}) = \norm{u \wedge v}/(\norm{u} \norm{v})$, où $u\wedge v$ est le produit vectoriel de $u$ et $v$.
 Le groupe $\bb G$ agit sur l'espace projectif $\bb P(\bb R^d)$ par multiplication : pour tout $\overline{v} \in \bb R^d$, on note $g \cdot \overline{v} = \overline{gv}$ l'action de $g$ sur la direction $\overline{v}$. Enfin pour tout $g \in \bb G$, on pose
\[
N(g) = \max \l( \norm{g}, \norm{g^{-1}} \r).
\]

Soit maintenant $(\Omega, \scr F, \bb P)$ un espace probabilisé, $\bb E$ l'espérance associée et $( g_n )_{n\geq 1}$ une suite de variables aléatoires \textit{i.i.d.}\ définie sur $\Omega$ et à valeurs dans $\bb G$ dont la loi commune est notée $\bs \mu$. Rappelons les hypothèses de \cite{grama_conditioned_2016}.
\begin{hypoP}
\label{HypoP1}
Il existe $\delta_0 >0$ tel que 
\[
\bb E \l( N(g_1)^{\delta_0} \r) = \int_{\bb G} \exp\l( \delta_0 \log \l( N(g) \r) \r) \bs \mu(g) < +\infty.
\]
\end{hypoP}
\begin{hypoP}[\textbf{Irréductibilité forte}]
\label{HypoP2}
L'action du support de $\bs \mu$ sur $\bb R^d$ est fortement irréductible c'est-à-dire qu'il n'existe pas d'union propre de sous-espace de $\bb R^d$ qui soit invariante par $\Gamma_{\mu}$, le plus petit semi-groupe fermé contenant le support de $\bs \mu$.
\end{hypoP}
\begin{hypoP}[\textbf{Propriété de contraction}]
\label{HypoP3}
Le semi-groupe $\Gamma_{\mu}$ contient une suite contractante.
\end{hypoP}
Soit $\rho$ le cocycle défini par
\[
\rho(g,\overline{v}) := \log \l( \frac{\norm{g v}}{\norm{v}} \r), \qquad \forall (g,\overline{v}) \in \bb G \times \bb P(\bb R^d).
\]
Sous les conditions \ref{HypoP1}-\ref{HypoP3}, on sait qu'il existe une unique mesure $\bs \nu$ qui soit $\bs \mu$-invariante sur $\bb P(\bb R^d)$.
\begin{hypoP}
\label{HypoP4}
Le plus grand exposant de Lyapunov vaut $0$ : $\int_{\bb G \times \bb P(\bb R^d)} \rho(g,\overline{v}) \bs \mu(\dd g) \bs \nu(\dd \overline{v}) = 0$.
\end{hypoP}
La condition $\bf{P5}$ de Grama, Le Page et Peigné \cite{grama_conditioned_2016} assure que la fonction harmonique est positive pour tout $y > 0$. Cette hypothèse n'est pas nécessaire dans notre propos et la Proposition \ref{Macédoine1} explicitera le domaine exact de positivité de cette fonction harmonique. 
%
Pour des explications sur ces conditions, on renvoie à l'article \cite{grama_conditioned_2016}. Introduisant maintenant la marche aléatoire associée à ce produit de matrices. Pour tout $n \geq 1$, on définit le produit de matrice
\[
G_n := g_n \dots g_1 \qquad \text{et} \qquad G_0 = \Id.
\]
Soit $\bb B$ la boule unité fermée de $\bb R^d$. Pour étudier le premier instant pour lequel le produit $G_n v$, pour $v \notin \bb B$ entre dans la boule unité $\bb B$, on considère le logarithme de sa norme
\[
\log\l( \norm{G_n v} \r) = \sum_{k=1}^{n} \rho \l( g_k, G_{k-1} \cdot \overline{v} \r) + \log(\norm{v}).
\]
Soit $\bb X = \bb G \times \bb P(\bb R^d)$. Pour $x=(g,\overline{v}) \in \bb X$, on considère $( X_n )_{n\geq 0}$ la chaîne de Markov sur $\Omega$ à valeurs dans $\bb X$ définie par $X_0 = x$ et
\[
X_n  = \l( g_n, G_{n-1}g\cdot \overline{v} \r), \qquad \forall n \geq 1.
\]
La marche markovienne associée est alors donnée par $S_n = \rho(X_1) + \dots \rho( X_n )$.

Ces rappels étant posés on introduit désormais l'espace de Banach que nous considérerons.

\subsection{L'espace de Banach}

On note $\scr C \l( \bb X, \bb C \r)$ l'ensemble des fonctions continues sur $\bb X$ et à valeurs dans $\bb C$. On fixe les paramètres suivants
\[
\ee = \frac{\delta_0}{8}, \qquad \theta = 3\ee = \frac{3\delta_0}{8}, \qquad \alpha = 5\ee = \frac{5\delta_0}{8}, \qquad \beta = 7\ee = \frac{7\delta_0}{8},
\]
où $\delta_0$ est défini par \ref{HypoP1}.
Pour toute fonction $h \in \scr C \l( \bb X, \bb C \r)$, on pose
\begin{align*}
\abs{h}_{\theta} &= \sup_{(g,\overline{u}) \in \bb X} \frac{\abs{h(g,\overline{u})}}{N(g)^{\theta}}, \\
k_{\ee,\alpha}(h) = \sup_{\substack{g \in \bb G \\ \overline{u} \neq \overline{v}}} \frac{\abs{h(g,\overline{u})-h(g,\overline{v})}}{d(\overline{u},\overline{v})^{\ee} N(g)^{\alpha}},
&\qquad k_{\ee,\beta}'(h) = \sup_{\substack{g \neq g' \\ \overline{u} \in \bb P(\bb R^d)}} \frac{\abs{h(g,\overline{u})-h(g',\overline{u})}}{\norm{g-g'}^{\ee} N(g)^{\beta} N(g')^{\beta}}.
\end{align*}
On définit alors la norme
\[
\norm{h}_{\scr B} := \abs{h}_{\theta} + k_{\ee,\alpha}(h) + k_{\ee,\beta}'(h),
\]
et l'espace de Banach associé
\[
\scr B := \left\{ h \in \scr C \l( \bb X, \bb C \r) : \norm{h}_{\scr B} < +\infty \right\}.
\]

\subsection{Preuve de M1}

Afin de faciliter la lecture on redonne au fur et à mesure les Hypothèses M1-M5

\begin{proposition}[Espace de Banach]
\label{Canada1}
Supposons \ref{HypoP1}. Alors
\begin{enumerate}[ref=\arabic*, leftmargin=*, label=\arabic*.]
	\item La fonction constante égale à $1$ notée $e$ appartient à $\scr B$.
	\item Pour tout $x\in \bb X$, la mesure de Dirac $\bs \delta_x$ appartient au dual de $\scr B$ noté $\scr B'$.
	\item L'espace de Banach $\scr B$ est inclus dans $L^1\l( \bf P (x,\cdot) \r)$, pour tout $x\in \bb X$.
	\item Il existe une constante $\kappa \in (0,1)$ telle que pour tout $h\in \scr B$, la fonction $\e^{it\rho}h$ est dans $\scr B$ pour tout $t$ vérifiant $\abs{t} \leq \kappa$.
\end{enumerate}
\end{proposition}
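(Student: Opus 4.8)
L'idée est de traiter les trois premiers points par des estimations élémentaires reposant sur l'inégalité $N(g) \geq 1$ (valable pour tout $g \in \bb G$ car $\norm g\,\norm{g^{-1}} \geq \norm{\Id} = 1$) et sur le fait que les exposants $\theta, \alpha, \beta$ sont tous $<\delta_0$; le point 4, qui constitue le cœur de la proposition, reposera sur deux estimations de régularité lipschitzienne du cocycle $\rho$.

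Pour le point 1, on remarque que $\abs e_\theta = \sup_g N(g)^{-\theta} = 1$ car $N(g)\geq 1$, tandis que $k_{\ee,\alpha}(e) = k_{\ee,\beta}'(e) = 0$ puisque $e$ est constante; ainsi $\norm e_{\scr B} = 1 < +\infty$. Pour le point 2, tout $x = (g,\overline u)$ donne la majoration $\abs{\bs\delta_x(h)} = \abs{h(g,\overline u)} \leq \abs h_\theta N(g)^\theta \leq N(g)^\theta \norm h_{\scr B}$, donc $\bs\delta_x \in \scr B'$ de norme au plus $N(g)^\theta$. Pour le point 3, on explicite le noyau de transition : partant de $x = (g,\overline u)$ on a $X_1 = (g_1, g\cdot\overline u)$, de sorte que $\bf P(x,\cdot)$ est la loi de $(g_1, g\cdot\overline u)$ avec $g_1 \sim \bs\mu$, et alors
\[
\int_{\bb X} \abs h \, \dd\bf P(x,\cdot) = \int_{\bb G} \abs{h(g_1, g\cdot\overline u)}\, \bs\mu(\dd g_1) \leq \abs h_\theta \int_{\bb G} N(g_1)^\theta\, \bs\mu(\dd g_1).
\]
Comme $\theta < \delta_0$ et $N(g_1) \geq 1$, on a $N(g_1)^\theta \leq N(g_1)^{\delta_0}$, et l'intégrale est finie par \ref{HypoP1}; d'où $\scr B \subset L^1(\bf P(x,\cdot))$.

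Le point 4 est la partie substantielle. La fonction $\e^{it\rho}h$ est continue comme produit de fonctions continues, et comme $\abs{\e^{it\rho}} = 1$ on a d'emblée $\abs{\e^{it\rho}h}_\theta = \abs h_\theta$. Pour les deux semi-normes höldériennes, j'écrirais pour chaque paire de points $x,y \in \bb X$ la décomposition
\[
\e^{it\rho(x)}h(x) - \e^{it\rho(y)}h(y) = \e^{it\rho(x)}\bigl(h(x)-h(y)\bigr) + \bigl(\e^{it\rho(x)}-\e^{it\rho(y)}\bigr)h(y),
\]
le premier terme étant contrôlé par la semi-norme correspondante de $h$, le second par l'inégalité d'interpolation $\abs{\e^{ia}-\e^{ib}} \leq 2^{1-\ee}\abs{a-b}^\ee$. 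Tout repose alors sur deux estimations lipschitziennes du cocycle, obtenues en choisissant des représentants unitaires formant un angle aigu et en utilisant $\abs{\log s-\log t} \leq \abs{s-t}/\min(s,t)$ avec $\norm{gw} \geq 1/\norm{g^{-1}}$ pour $\norm w = 1$ :
\begin{align*}
\abs{\rho(g,\overline u)-\rho(g,\overline v)} &\leq \norm g\,\norm{g^{-1}}\,\norm{u-v} \leq \sqrt 2\, N(g)^2\, d(\overline u,\overline v), \\
\abs{\rho(g,\overline u)-\rho(g',\overline u)} &\leq \norm{g-g'}\,\max\bigl(\norm{g^{-1}},\norm{g'^{-1}}\bigr) \leq \norm{g-g'}\,N(g)\,N(g').
\end{align*}

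En élevant ces bornes à la puissance $\ee$, le second terme de la décomposition fait apparaître un poids supplémentaire $N(g)^{2\ee}$ dans la variable projective et $N(g)^\ee N(g')^\ee$ dans la variable matricielle, à combiner avec le poids $N(g)^\theta$ (resp. $N(g')^\theta$) issu de $\abs h_\theta$. La vérification décisive est alors purement arithmétique sur les exposants : dans la variable projective $2\ee + \theta = 5\ee = \alpha$, et dans la variable matricielle $\ee \leq \beta$ ainsi que $\ee + \theta = 4\ee \leq 7\ee = \beta$; comme $N \geq 1$, les puissances inférieures sont absorbées. On obtient ainsi $k_{\ee,\alpha}(\e^{it\rho}h) \leq k_{\ee,\alpha}(h) + C\abs t^\ee \abs h_\theta$ et $k_{\ee,\beta}'(\e^{it\rho}h) \leq k_{\ee,\beta}'(h) + C\abs t^\ee \abs h_\theta$, d'où $\norm{\e^{it\rho}h}_{\scr B} \leq (1 + C\abs t^\ee)\norm h_{\scr B} < +\infty$; l'estimation valant pour tout $t$ réel, n'importe quel $\kappa \in (0,1)$ convient. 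Le principal obstacle, et le seul endroit demandant du soin, est l'obtention des deux estimations de $\rho$ avec les bons exposants de $N(g)$, puis la constatation que la calibration $\theta = 3\ee$, $\alpha = 5\ee$, $\beta = 7\ee$ absorbe exactement les puissances supplémentaires de $N$ engendrées par la multiplication par $\e^{it\rho}$ — c'est elle qui rend $\scr B$ stable par cette opération.
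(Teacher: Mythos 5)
Your proposal is correct and follows essentially the same route as the paper: elementary estimates from $N(g)\geq 1$ and $\theta\leq\delta_0$ for points 1--3, and for point 4 the same decomposition of $\e^{it\rho}h(x)-\e^{it\rho}h(y)$, the same two Lipschitz estimates on the cocycle (bounded by $\sqrt2\,N(g)^2 d(\overline u,\overline v)$ in the projective variable and by $\norm{g-g'}$ times inverse-norm factors in the matrix variable), the same interpolation with the trivial bound $2$, and the same exponent arithmetic $\theta+2\ee=\alpha$, $\ee+\theta\leq\beta$, yielding stability for all $t\in\bb R$. The only cosmetic difference is that you use the mean-value inequality $\abs{\log s-\log t}\leq\abs{s-t}/\min(s,t)$ where the paper invokes $\abs{\log s}\leq\abs{1-s}$; your formulation is in fact the cleaner one, since the paper's inequality as literally stated fails for small $s$ and is only valid after placing the larger norm in the numerator.
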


\begin{proof} \textit{Point 1.} Puisque $N(g) \geq 1$ pour tout $g \in \bb G$, il est clair que $e \in \scr B$.

\textit{Point 2.} Pour tout $x = (g,\overline{u}) \in \bb X$ et $h \in \scr B$,
\[
\abs{\bs \delta_x(h)} = \abs{h(x)} \leq N(g)^{\theta} \abs{h}_{\theta} \leq N(g)^{\theta} \norm{h}_{\theta}.
\]
Donc $\bs \delta_x \in \scr B'$ et 
\begin{equation}
\label{Bulgarie}
\norm{\bs \delta_x}_{\scr B'} \leq N(g)^{\theta}.
\end{equation}

\textit{Point 3.} Pour tout $x = (g,\overline{u}) \in \bb X$ et $h \in \scr B$,
\[
\bf P \abs{h}(x) = \int_{\bb G} \abs{h(g_1,g\cdot\overline{u})} \bs \mu(\dd g_1) \leq \abs{h}_{\theta} \int_{\bb G} N(g_1)^{\theta} \bs \mu(\dd g_1).
\]
Puisque $\theta = 3\delta_0/8 \leq \delta_0$, par \ref{HypoP1}, on a
\[
\bf P \abs{h}(x) \leq c \abs{h}_{\theta} <+\infty.
\]

\textit{Point 4.} Soit $h \in \scr B$ et $t \in \bb R$. On note tout d'abord que 
\begin{equation}
\label{Etats-Unis001}
\abs{\e^{it\rho}h}_{\theta} = \abs{h}_{\theta}.
\end{equation}
Ensuite pour tout $(g,\overline{u})$ et $(g,\overline{v})$ dans $\bb X$, on écrit que
\begin{equation}
\label{Mexique}
\abs{\e^{it\rho(g,\overline{u})}h(g,\overline{u}) - \e^{it\rho(g,\overline{v})}h(g,\overline{v})} \leq \abs{h}_{\theta} N(g)^{\theta} \abs{\e^{it\rho(g,\overline{u})} - \e^{it\rho(g,\overline{v})}} + \abs{h(g,\overline{u}) - h(g,\overline{v})}.
\end{equation}
Soient $u$ et $v$ deux représentants de $\overline{u}$ respectivement $\overline{v}$, de norme $1$ et de même sens ($\scal{u}{v} \geq 0$). On a
\[
\abs{\e^{it\rho(g,\overline{u})} - \e^{it\rho(g,\overline{v})}} \leq \abs{t} \abs{ \rho(g,\overline{u}) - \rho(g,\overline{v}) } = \abs{t} \abs{\log \l( \frac{\norm{gu}}{\norm{gv}} \r)}.
\]
En utilisant le fait que $\abs{\log(s)} \leq \abs{1-s}$ pour tout $s > 0$,
\[
\abs{ \rho(g,\overline{u}) - \rho(g,\overline{v}) } \leq \frac{\norm{g(u-v)}}{\norm{gv}} \leq \norm{g}\norm{g^{-1}} \norm{u-v}.
\]
Rappelons que
\begin{equation}
\label{Norvège}
\norm{u-v} \leq \sqrt{2} d(\overline{u},\overline{v}) \qquad \text{et que} \qquad d(\overline{u},\overline{v}) \leq \norm{u-v}.
\end{equation}
Donc
\begin{equation}
\label{Islande}
\abs{ \rho(g,\overline{u}) - \rho(g,\overline{v}) } \leq \sqrt{2} N(g)^2 d(\overline{u},\overline{v}).
\end{equation}
D'où,
\[
\abs{\e^{it\rho(g,\overline{u})} - \e^{it\rho(g,\overline{v})}} \leq \sqrt{2} \abs{t} N(g)^2 d(\overline{u},\overline{v}).
\]
D'autre part $\abs{\e^{it\rho(g,\overline{u})} - \e^{it\rho(g,\overline{v})}} \leq 2$, donc
\[
\abs{\e^{it\rho(g,\overline{u})} - \e^{it\rho(g,\overline{v})}} \leq 2^{1-\ee+\ee/2} \abs{t}^{\ee} N(g)^{2\ee} d(\overline{u},\overline{v})^{\ee}.
\]
De \eqref{Mexique}, on en déduit que
\[
\abs{\e^{it\rho(g,\overline{u})}h(g,\overline{u}) - \e^{it\rho(g,\overline{v})}h(g,\overline{v})} \leq 2 \abs{t}^{\ee} \abs{h}_{\theta} N(g)^{\theta+2\ee} d(\overline{u},\overline{v})^{\ee} + k_{\ee,\alpha}(h) d(\overline{u},\overline{v})^{\ee} N(g)^{\alpha}.
\]
Puisque $\alpha = \theta +2\ee$, on a
\begin{equation}
\label{Etats-Unis002}
k_{\ee,\alpha}\l( \e^{it\rho}h \r) \leq 2 \abs{t}^{\ee} \abs{h}_{\theta} + k_{\ee,\alpha}(h) <+\infty.
\end{equation}

On procède de même pour $k_{\ee,\beta}'\l( \e^{it\rho}h \r)$. Soient $(g,\overline{u})$ et $(g',\overline{u})$ dans $\bb X$. Alors,
\[
\abs{\e^{it\rho(g,\overline{u})}h(g,\overline{u}) - \e^{it\rho(g',\overline{u})}h(g',\overline{u})} \leq \abs{h}_{\theta} N(g)^{\theta} \abs{\e^{it\rho(g,\overline{u})} - \e^{it\rho(g',\overline{u})}} + \abs{h(g,\overline{u}) - h(g',\overline{u})}.
\]
Comme précédemment,
\[
\abs{\e^{it\rho(g,\overline{u})} - \e^{it\rho(g',\overline{u})}} \leq \abs{t} \frac{\norm{(g'-g)u}}{\norm{g'u}} \leq \abs{t} N(g') \norm{g-g'}.
\]
Puisque $\abs{\e^{it\rho(g,\overline{u})} - \e^{it\rho(g',\overline{u})}} \leq 2$, on en déduit que
\[
\abs{\e^{it\rho(g,\overline{u})} - \e^{it\rho(g',\overline{u})}} \leq 2 \abs{t}^{\ee} N(g')^{\ee} \norm{g-g'}^{\ee}.
\]
D'où,
\begin{align*}
\abs{\e^{it\rho(g,\overline{u})}h(g,\overline{u}) - \e^{it\rho(g',\overline{u})}h(g',\overline{u})} &\leq 2 \abs{t}^{\ee} \abs{h}_{\theta} N(g)^{\theta}N(g')^{\ee} \norm{g-g'}^{\ee} \\
&\hspace{3cm}+ k_{\ee,\beta}'(h) \norm{g-g'}^{\ee} N(g)^{\beta}N(g')^{\beta}.
\end{align*}
Comme $\ee \leq \beta$ et $\theta \leq \beta$,
\begin{equation}
\label{Etats-Unis003}
k_{\ee,\beta}'\l( \e^{it\rho}h \r) \leq 2 \abs{t}^{\ee} \abs{h}_{\theta} + k_{\ee,\beta}'(h) <+\infty.
\end{equation}
En utilisant \eqref{Etats-Unis001}, \eqref{Etats-Unis002} et \eqref{Etats-Unis003}, on conclut que pour tout $t \in \bb R$, $\e^{it\rho}h \in \scr B$. De plus,
\[
\norm{\e^{it\rho}h}_{\scr B} \leq \norm{h}_{\scr B} + 4 \abs{t}^{\ee} \abs{h}_{\theta}.
\]
\end{proof}

\subsection{Preuve de M2 et M3}

On rappelle que l'opérateur perturbé est donné par $\bf P_t h(x) = \bf P ( \e^{it\rho} h )(x)$ pour tout $t\in \bb R$, $h \in \scr B$ et $x \in \bb X$. L'objectif est alors de montrer que l'opérateur perturbé $\bf P_t$, $t \in \bb R$ vérifie les hypothèses du théorème de Ionescu-Tulcea et Marinescu \cite{tulcea_theorie_1950}. Ceci impliquera notamment que $\bf P$ possède un trou spectral et nous pourrons alors établir la Proposition \ref{Canada2}. Auparavant, on rappelle un résultat de Le Page \cite{page_theoremes_1982} (Théorème 1). On trouvera également un énoncé de cette proposition dans Grama, Le Page et Peigné \cite{grama_conditioned_2016} (Proposition 8.6).

\begin{proposition}
\label{Finlande}
Sous les conditions \ref{HypoP1}-\ref{HypoP3}, il existe $\ee_0 > 0$ et $r_{\ee_0} \in(0,1)$ tels que
\[
\lim_{n\to+\infty} \sup_{\overline{u} \neq \overline{v}} \bb E \l( \frac{ d\l( G_n \cdot \overline{u}, G_n \cdot \overline{v} \r)^{\ee_0} }{ d\l( \overline{u}, \overline{v} \r)^{\ee_0} } \r)^{1/n} = r_{\ee_0}.
\]
\end{proposition}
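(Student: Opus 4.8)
Le plan est d'établir la proposition par un argument de sous-additivité, l'existence de la limite relevant du lemme de Fekete, l'essentiel du travail résidant dans la stricte négativité du taux de contraction. On poserait, pour $\ee > 0$ et $n\geq 1$,
\[
\gamma_n(\ee) := \sup_{\overline u \neq \overline v} \bb E \l( \frac{d\l( G_n \cdot \overline u, G_n \cdot \overline v \r)^{\ee}}{d\l( \overline u, \overline v \r)^{\ee}} \r),
\]
et il s'agirait de montrer que $\gamma_n(\ee)^{1/n}$ converge vers une limite $r_{\ee} < 1$ pour un $\ee = \ee_0$ bien choisi.

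La première étape, facile, consiste à contrôler $\gamma_n(\ee)$. En choisissant des représentants unitaires $u, v$ de $\overline u, \overline v$, on écrirait $d(g\cdot\overline u, g\cdot\overline v) = \norm{\wedge^2 g\,(u\wedge v)}/(\norm{gu}\,\norm{gv})$ ; comme le facteur de dilatation de l'action projective est le rapport des valeurs singulières extrêmes, on obtiendrait la majoration déterministe $d(g\cdot\overline u, g\cdot\overline v) \leq N(g)^2\, d(\overline u, \overline v)$. Ainsi $\gamma_1(\ee) \leq \bb E\l( N(g_1)^{2\ee}\r)$, qui est fini dès que $2\ee \leq \delta_0$ d'après \ref{HypoP1} ; on se restreindrait donc à $\ee \in (0, \delta_0/2]$.

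La deuxième étape est la sous-multiplicativité. En décomposant $G_{n+m} = G'_m\, G_n$ avec $G'_m := g_{n+m}\cdots g_{n+1}$ indépendant de $G_n$ et de même loi que $G_m$, puis en conditionnant par $G_n$ et en utilisant le caractère de cocycle de la distance angulaire le long de l'orbite, on obtiendrait
\[
\gamma_{n+m}(\ee) \leq \gamma_n(\ee)\,\gamma_m(\ee).
\]
La suite $\l( \log\gamma_n(\ee)\r)_n$ étant sous-additive, le lemme de Fekete fournirait l'existence de $r_{\ee} = \lim_n \gamma_n(\ee)^{1/n} = \inf_n \gamma_n(\ee)^{1/n}$.

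Le point délicat, et le c\oe ur de la preuve, serait d'établir que $r_{\ee_0} < 1$ pour un $\ee_0 > 0$. L'idée est d'exhiber un rang $n_0$ pour lequel la dérive est strictement négative, c'est-à-dire
\[
\Delta_{n_0} := \sup_{\overline u \neq \overline v} \bb E \l( \log \frac{d\l( G_{n_0} \cdot \overline u, G_{n_0} \cdot \overline v \r)}{d\l( \overline u, \overline v \r)} \r) < 0,
\]
puis de conclure par un développement de $\gamma_{n_0}(\ee)$ à l'ordre deux au voisinage de $\ee = 0$, les termes de reste étant contrôlés uniformément en $(\overline u, \overline v)$ grâce aux moments fournis par \ref{HypoP1}, le rang $n_0$ étant fixe. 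On en déduirait $\gamma_{n_0}(\ee_0) < 1$ pour $\ee_0$ assez petit, d'où $r_{\ee_0} \leq \gamma_{n_0}(\ee_0)^{1/n_0} < 1$. Pour obtenir la négativité de $\Delta_{n_0}$, on écrirait
\[
\log \frac{d\l( G_n \cdot \overline u, G_n \cdot \overline v \r)}{d\l( \overline u, \overline v \r)} = \log\norm{\wedge^2 G_n\,(u\wedge v)} - \log\norm{G_n u} - \log\norm{G_n v} - \log\norm{u\wedge v},
\]
ce qui ramènerait le calcul de $\Delta_n/n$, pour $n$ grand, à l'écart $\lambda_2 - \lambda_1$ entre les deux premiers exposants de Lyapunov. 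C'est ici qu'interviennent de manière essentielle \ref{HypoP2} et \ref{HypoP3} : par le théorème de Furstenberg, l'exposant de Lyapunov dominant est simple, donc $\lambda_1 > \lambda_2$ et $\Delta_n < 0$ pour $n$ assez grand. C'est précisément cette simplicité, ainsi que le contrôle uniforme en $(\overline u, \overline v)$ des directions exceptionnelles, qui constituent la principale difficulté de l'énoncé ; il s'agit du contenu de l'argument original de Le Page \cite{page_theoremes_1982}, auquel on renvoie.
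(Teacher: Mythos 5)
The paper does not prove this proposition at all: it is recalled verbatim as an external result, namely Théorème 1 of Le Page \cite{page_theoremes_1982} (restated as Proposition 8.6 in \cite{grama_conditioned_2016}), so there is no internal proof to compare yours against. Your sketch is, in substance, a correct reconstruction of the architecture of Le Page's original argument, and the steps you detail are sound: the bound $d(g\cdot\overline{u},g\cdot\overline{v})\leq N(g)^{2}\,d(\overline{u},\overline{v})$ does follow from the singular values of $g$ restricted to the plane spanned by $u,v$ (the paper itself derives the weaker variant $2\sqrt{2}\,N(g)^{2}$ inside the proof of Lemme \ref{Biélorussie}); submultiplicativity of $\gamma_{n}(\ee)$ follows by conditioning on $g_{1},\dots,g_{n}$, since $G_{n}$ is invertible and $G'_{m}$ is an independent copy of $G_{m}$, so Fekete indeed yields the limit; and the expansion of $\gamma_{n_{0}}(\ee)$ at $\ee=0$ is legitimate because $N(g)^{-2}\leq d(g\cdot\overline{u},g\cdot\overline{v})/d(\overline{u},\overline{v})\leq N(g)^{2}$, which by independence and \ref{HypoP1} gives uniform exponential moments of $\log$ of the ratio at fixed $n_{0}$. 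Note that you only state the upper bound; the lower bound (obtained by applying your Lipschitz estimate to $g^{-1}$) is needed both to control $\abs{\log(\cdot)}$ in the Taylor remainder and to get $r_{\ee_{0}}>0$, which the statement also asserts. The genuine content of the proposition --- the uniform-in-$(\overline{u},\overline{v})$ strict negativity of $\Delta_{n_{0}}$, resting on the gap $\lambda_{1}>\lambda_{2}$ under \ref{HypoP2}--\ref{HypoP3} --- is exactly what you delegate to \cite{page_theoremes_1982}; since that is also all the paper does, this is acceptable here, but be aware that your sketch then proves nothing beyond what the citation already carries. One attribution quibble: simplicity of the top Lyapunov exponent under strong irreducibility and contraction is a Guivarc'h--Raugi-type result (and is established in the needed form by Le Page directly); Furstenberg's theorem per se concerns positivity, not simplicity.
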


On fournit dans le lemme suivant un contrôle de la norme de $\bf P_t$.
\begin{lemme}
\label{Biélorussie}
Supposons \ref{HypoP1}-\ref{HypoP3}.
Pour tout $t \in \bb R$, $n \geq 1$ et $h \in \scr B$, la fonction $\bf P_t^n h$ est un élément de $\scr B$ et de plus,
\[
\norm{\bf P_t^n h}_{\scr B} \leq c_{\ee} \l( 1+ \abs{t}^{\ee} \r) \abs{h}_{\theta} + c_{\ee} k_{\ee,\alpha}(h) r_{\ee}^n.
\]
\end{lemme}

\begin{proof}
Fixons $t \in \bb R$ et $n \geq 1$. Pour tout $h \in \scr B$, $x =(g,\overline{u}) \in \bb X$ et $n \geq 1$ la représentation habituelle de l'opérateur $\bf P_t^n$ est donnée par
\[
\bf P_t^nh(x) = \bb E_x \l( \e^{itS_n} h(X_n) \r).
\]
Puisque $S_n \in \bb R$, par définition de $X_n$,
\begin{equation}
\label{Lettonie}
\abs{\bf P_t^nh(x)} \leq \bb E_x \l( \abs{ h(g_n,G_{n-1}g\cdot \overline{u})} \r) \leq \abs{h}_{\theta} \bb E \l( N(g_n)^{\theta} \r)
\end{equation}
Donc comme $\theta = 3\delta_0/8 \leq \delta_0$, par \ref{HypoP1},
\begin{equation}
\label{Estonie1}
\abs{\bf P_t^nh}_{\theta} \leq \abs{h}_{\theta} \bb E \l( N(g_n)^{\theta} \r) < +\infty.
\end{equation}

Pour tout $x \in \bb X$, on note $X_n^x$ la chaîne de Markov issue de $X_0 = x$ et $S_n^x$ la marche associée. Pour tout $(g,\overline{u})$ et $(g,\overline{v})$ éléments de $\bb X$,
\begin{align*}
\Delta_n := \abs{\bf P_t^nh(g,\overline{u}) - \bf P_t^nh(g,\overline{v})} &= \abs{\bb E \l( \e^{it S_n^{(g,\overline{u})}} h\l( X_n^{(g,\overline{u})} \r) - \e^{it S_n^{(g,\overline{v})}} h\l( X_n^{(g,\overline{v})} \r) \r)} \\
&\leq \bb E \l( \abs{h\l( g_n,G_{n-1}g \cdot \overline{u} \r) - h\l( g_n,G_{n-1}g \cdot \overline{v} \r)} \r) \\
&\qquad+ \bb E \l( \abs{h\l( g_n,G_{n-1}g \cdot \overline{u} \r)} \abs{\e^{it S_n^{(g,\overline{u})}}  - \e^{it S_n^{(g,\overline{v})}}} \r) \\
&\leq k_{\ee,\alpha}(h) \bb E \l( d\l( G_{n-1}g \cdot \overline{u}, G_{n-1}g \cdot \overline{v} \r)^{\ee} N(g_n)^{\alpha} \r) \\
&\qquad+ \abs{h}_{\theta} \bb E \l( N(g_n)^{\theta} \abs{\e^{it S_n^{(g,\overline{u})}}  - \e^{it S_n^{(g,\overline{v})}}} \r)
\end{align*}
Or,
\[
\abs{\e^{it S_n^{(g,\overline{u})}}  - \e^{it S_n^{(g,\overline{v})}}} \leq \min \l( \abs{t} \abs{ S_n^{(g,\overline{u})} - S_n^{(g,\overline{v})} }, 2 \r) \leq 2^{1-\ee} \abs{t}^{\ee} \abs{ S_n^{(g,\overline{u})} - S_n^{(g,\overline{v})} }^{\ee}.
\]
Donc en utilisant l'indépendance des $g_i$, $i \geq 1$,
\begin{align*}
\Delta_n &\leq k_{\ee,\alpha}(h) \bb E \l( d\l( G_{n-1}g \cdot \overline{u}, G_{n-1}g \cdot \overline{v} \r)^{\ee} \r) \bb E \l( N(g_n)^{\alpha} \r) \\
&\qquad+ 2^{1-\ee} \abs{t}^{\ee} \abs{h}_{\theta} \bb E \l( N(g_n)^{\theta} \abs{\rho\l( g_n,G_{n-1}g \cdot \overline{u} \r) - \rho\l( g_n,G_{n-1}g \cdot \overline{v} \r)}^{\ee} \r) \\
&\qquad+ 2^{1-\ee} \abs{t}^{\ee} \abs{h}_{\theta} \sum_{k=1}^{n-1} \bb E\l( N(g_n)^{\theta}  \r) \bb E \l( \abs{\rho\l( g_k,G_{k-1}g \cdot \overline{u} \r) - \rho\l( g_k,G_{k-1}g \cdot \overline{v} \r)}^{\ee} \r).
\end{align*}
En utilisant \eqref{Islande} et le fait que $\theta+2\ee = 5\ee = \alpha$,
\begin{align}
\Delta_n &\leq \l( k_{\ee,\alpha}(h) + 2^{1-\ee/2} \abs{t}^{\ee} \abs{h}_{\theta} \r) \bb E \l( d\l( G_{n-1}g \cdot \overline{u}, G_{n-1}g \cdot \overline{v} \r)^{\ee} \r) \bb E \l( N(g_1)^{\alpha} \r) \nonumber\\
&\qquad+ 2^{1-\ee/2} \abs{t}^{\ee} \abs{h}_{\theta} \bb E\l( N(g_1)^{\theta}  \r) \sum_{k=1}^{n-1}  \bb E \l( N(g_1)^{2\ee} \r) \bb E \l( d\l( G_{k-1}g \cdot \overline{u}, G_{k-1}g \cdot \overline{v} \r)^{\ee} \r).
\label{Suède}
\end{align}
Invoquons à présent le fait que la suite $( G_k )_{k \geq 0}$ contracte les directions. Sans perte de généralité, on peut supposer que $\delta_0/8 \leq \ee_0$. Donc par la Proposition \ref{Finlande}, il existe $n_0$ et $r_{\ee} \in (0,1)$ tels que pour tout $n \geq n_0$ et $(\overline{u}, \overline{v}) \in \bb P (\bb R^d)^2$,
\[
\bb E \l( d\l( G_ng \cdot \overline{u}, G_ng \cdot \overline{v} \r)^{\ee} \r) \leq r_{\ee}^n d\l( g\cdot\overline{u}, g\cdot\overline{v} \r)^{\ee}.
\]
Par \eqref{Norvège}, avec $u$ et $v$ deux représentants de $\overline{u}$ respectivement $\overline{v}$, de norme $1$ et de même sens ($\scal{u}{v} \geq 0$),
\begin{align*}
d\l( g\cdot\overline{u}, g\cdot\overline{v} \r) \leq \norm{\frac{gu}{\norm{gu}} - \frac{gv}{\norm{gv}}} &\leq \frac{\norm{g(u-v)}}{\norm{gu}} + \norm{gv} \abs{\frac{1}{\norm{gu}} - \frac{1}{\norm{gv}}} \\
&\leq N(g)^2 \norm{u-v} + \frac{\abs{\norm{gv} - \norm{gu}}}{\norm{gu}} \\
&\leq 2N(g)^2 \norm{u-v} \\
&\leq 2\sqrt{2} N(g)^2 d\l( \overline{u}, \overline{v} \r).
\end{align*}
Donc pour tout $n \geq n_0$
\[
\bb E \l( d\l( G_ng \cdot \overline{u}, G_ng \cdot \overline{v} \r)^{\ee} \r) \leq 2^{3\ee/2} r_{\ee}^n N(g)^{2\ee} d\l( \overline{u}, \overline{v} \r)^{\ee},
\]
De même, pour tout $n \leq n_0$,
\begin{align*}
\bb E \l( d\l( G_ng \cdot \overline{u}, G_ng \cdot \overline{v} \r)^{\ee} \r) \leq \bb E \l( N(G_n)^{2\ee} \r) N(g)^{2\ee} d\l( \overline{u}, \overline{v} \r)^{\ee} &\leq \bb E \l( N(g_1)^{2\ee} \r)^n N(g)^{2\ee} d\l( \overline{u}, \overline{v} \r)^{\ee} \\
&\leq c_{\ee} N(g)^{2\ee} d\l( \overline{u}, \overline{v} \r)^{\ee},
\end{align*}
où $c_{\ee}$ est une constante numérique ne dépendant que de $\ee$ et dont la valeur est susceptible de changer dans la suite. On obtient que pour tout $n \geq 1$,
\[
\bb E \l( d\l( G_ng \cdot \overline{u}, G_ng \cdot \overline{v} \r)^{\ee} \r) \leq c_{\ee} r_{\ee}^n N(g)^{2\ee} d\l( \overline{u}, \overline{v} \r)^{\ee}.
\]
Ainsi, en injectant cette inégalité dans \eqref{Suède}
\[
\Delta_n \leq c_{\ee} k_{\ee,\alpha}(h) r_{\ee}^{n-1} N(g)^{2\ee} d\l( \overline{u}, \overline{v} \r)^{\ee} + c_{\ee} \abs{t}^{\ee} \abs{h}_{\theta} N(g)^{2\ee} d\l( \overline{u}, \overline{v} \r)^{\ee} \sum_{k=1}^{n} r_{\ee}^{k-1}.
\]
Puisque $2\ee \leq \alpha = 5\ee$, on en déduit que
\begin{equation}
\label{Estonie2}
k_{\ee,\alpha}(\bf P_t^n h) \leq c_{\ee} \abs{t}^{\ee} \abs{h}_{\theta} + c_{\ee} k_{\ee,\alpha}(h) r_{\ee}^n < +\infty.
\end{equation}

De même pour tout $(g,\overline{u})$ et $(g',\overline{u})$ éléments de $\bb X$,
\begin{align*}
\Delta_n' &:= \abs{\bf P_t^nh(g,\overline{u}) - \bf P_t^nh(g',\overline{u})} \\
 &\leq k_{\ee,\alpha}(h) \bb E \l( d\l( G_{n-1}g \cdot \overline{u}, G_{n-1}g' \cdot \overline{u} \r)^{\ee} \r) \bb E \l( N(g_n)^{\alpha} \r) \\
&\qquad+ 2^{1-\ee} \abs{t}^{\ee} \abs{h}_{\theta} \bb E \l( N(g_n)^{\theta} \abs{\rho\l( g_n,G_{n-1}g \cdot \overline{u} \r) - \rho\l( g_n,G_{n-1}g' \cdot \overline{u} \r)}^{\ee} \r) \\
&\qquad+ 2^{1-\ee} \abs{t}^{\ee} \abs{h}_{\theta} \sum_{k=1}^{n-1} \bb E\l( N(g_n)^{\theta}  \r) \bb E \l( \abs{\rho\l( g_k,G_{k-1}g \cdot \overline{u} \r) - \rho\l( g_k,G_{k-1}g' \cdot \overline{u} \r)}^{\ee} \r).
\end{align*}
En utilisant à nouveau \eqref{Islande},
\[
\Delta_n' \leq c_{\ee} k_{\ee,\alpha}(h) \bb E \l( d\l( G_{n-1}g \cdot \overline{u}, G_{n-1}g' \cdot \overline{u} \r)^{\ee} \r) + c_{\ee} \abs{t}^{\ee} \abs{h}_{\theta} \sum_{k=1}^{n} \bb E \l( d\l( G_{k-1}g \cdot \overline{u}, G_{k-1}g' \cdot \overline{u} \r)^{\ee} \r).
\]
Or comme précédemment, par \eqref{Norvège},
\[
d\l( g\cdot\overline{u}, g'\cdot\overline{u} \r) \leq \norm{\frac{gu}{\norm{gu}} - \frac{g'u}{\norm{g'u}}} \leq \frac{\norm{(g-g')u}}{\norm{gu}} + \norm{g'u} \abs{\frac{1}{\norm{gu}} - \frac{1}{\norm{g'u}}} \leq 2N(g) \norm{g-g'}.
\]
Donc par la Proposition \ref{Finlande}, pour tout $n \geq 1$,
\[
\bb E \l( d\l( G_ng \cdot \overline{u}, G_ng \cdot \overline{v} \r)^{\ee} \r) \leq c_{\ee} r_{\ee}^n N(g)^{\ee} \norm{g-g'}^{\ee}.
\]
Ainsi,
\[
\Delta_n' \leq c_{\ee} k_{\ee,\alpha}(h) r_{\ee}^n N(g)^{\ee} \norm{g-g'}^{\ee} + c_{\ee} \abs{t}^{\ee} \abs{h}_{\theta} N(g)^{\ee} \norm{g-g'}^{\ee}.
\]
Puisque $\beta \geq \ee$ et que $N(g') \geq 1$, on obtient que
\begin{equation}
\label{Estonie3}
k_{\ee,\beta}'(\bf P_t^n h) \leq c_{\ee} k_{\ee,\alpha}(h) r_{\ee}^n + c_{\ee} \abs{t}^{\ee} \abs{h}_{\theta} < +\infty.
\end{equation}
En ajoutant \eqref{Estonie1}, \eqref{Estonie2} et \eqref{Estonie3}, on achève la démonstration.
\end{proof}

On peut maintenant montrer que les conditions du théorème de Ionescu-Tulcea et Marinescu \cite{tulcea_theorie_1950} sont vérifiés sous \ref{HypoP1}-\ref{HypoP3}. On pourra se référer au livre de Norman \cite{norman1972markov}.

\begin{lemme}
\label{Ukraine}
Supposons \ref{HypoP1}-\ref{HypoP3}.
L'espace de Banach $(\scr B, \norm{\cdot}_{\scr B})$ est inclus dans $( \scr C(\bb X, \bb C), \abs{\cdot}_{\theta})$ qui est lui aussi un espace de Banach.
\begin{enumerate}[ref=\arabic*, leftmargin=*, label= \arabic*.]
\item Soient $( h_n )_{n\geq 1} \in \scr B^{\bb N}$ et $h \in \scr C(\bb X,\bb C)$ tels que $\abs{h_n-h}_{\theta} \to 0$ quand $n \to +\infty$ et tels que pour tout $n \geq 1$, $\norm{h_n}_{\scr B} \leq C$. Alors $h \in \scr B$ et de plus $\norm{h}_{\scr B} \leq C$.
\item On a pour tout $t \in \bb R$ et $h \in \scr B$,  
\[
\sup_{n\geq 0} \frac{\abs{\bf P_t^n h}_{\theta}}{\abs{h}_{\theta}} < +\infty.
\]
\item Pour tout $t \in \bb R$, il existe $k_0 \geq 1$, $r \in (0,1)$ et $c >0$ tels que pour tout $k \geq k_0$ et tout $h \in \scr B$,
\[
\norm{\bf P_t^k h}_{\scr B} \leq r \norm{h}_{\scr B} + c \abs{h}_{\theta}.
\]
\item Pour tout $t \in \bb R$, l'opérateur $\bf P_t$ de $(\scr B, \norm{\cdot})$ à $(\scr C(\bb X,\bb C), \abs{\cdot}_{\theta})$ est compact : pour toute partie bornée $B$ de $\scr B$, l'ensemble $\bf P_t B$ est relativement compact.
\end{enumerate}
\end{lemme}

\begin{proof}
\textit{Point 1.} Soient $( h_n )_{n\geq 1} \in \scr B^{\bb N}$ et $h \in \scr C(\bb X,\bb C)$. On suppose que $\abs{h_n-h}_{\theta} \to 0$ quand $n \to +\infty$ et que pour tout $n \geq 1$, $\norm{h_n}_{\scr B} \leq C$. Pour tout $(g_1,g_2,g_3,g_3') \in \bb G^4$, $(\overline{u_1},\overline{u_2},\overline{u_2}',\overline{u_3}) \in \bb P(\bb R^d)^4$ et tout $n \geq 0$, on écrit que
\begin{align*}
&\frac{\abs{h(g_1,\overline{u_1})}}{N(g_1)^{\theta}} + \frac{\abs{h(g_2,\overline{u_2})-h(g_2,\overline{u_2}')}}{d(\overline{u_2},\overline{u_2}')^{\ee} N(g_2)^{\alpha}} + \frac{\abs{h(g_3,\overline{u_3})-h(g_3',\overline{u_3})}}{\norm{g_3-g_3'}^{\ee} N(g_3)^{\beta} N(g_3')^{\beta}} \\
&\hspace{2cm}\leq \frac{\abs{h_n(g_1,\overline{u_1})}}{N(g_1)^{\theta}} + \frac{\abs{h_n(g_2,\overline{u_2})-h_n(g_2,\overline{u_2}')}}{d(\overline{u_2},\overline{u_2}')^{\ee} N(g_2)^{\alpha}} + \frac{\abs{h_n(g_3,\overline{u_3})-h_n(g_3',\overline{u_3})}}{\norm{g_3-g_3'}^{\ee} N(g_3)^{\beta} N(g_3')^{\beta}} \\
&\hspace{4cm} + \abs{h-h_n}_{\theta} \l( 1+ \frac{2 N(g_2)^{\theta}}{d(\overline{u_2},\overline{u_2}')^{\ee} N(g_2)^{\alpha}} + \frac{N(g_3)^{\theta} + N(g_3')^{\theta}}{\norm{g_3-g_3'}^{\ee} N(g_3)^{\beta} N(g_3')^{\beta}} \r)
\end{align*}
Donc
\begin{align*}
&\frac{\abs{h(g_1,\overline{u_1})}}{N(g_1)^{\theta}} + \frac{\abs{h(g_2,\overline{u_2})-h(g_2,\overline{u_2}')}}{d(\overline{u_2},\overline{u_2}')^{\ee} N(g_2)^{\alpha}} + \frac{\abs{h(g_3,\overline{u_3})-h(g_3',\overline{u_3})}}{\norm{g_3-g_3'}^{\ee} N(g_3)^{\beta} N(g_3')^{\beta}} \\
&\hspace{2cm} \leq C + \abs{h-h_n}_{\theta} \l( 1+ \frac{2 N(g_2)^{\theta}}{d(\overline{u_2},\overline{u_2}')^{\ee} N(g_2)^{\alpha}} + \frac{N(g_3)^{\theta} + N(g_3')^{\theta}}{\norm{g_3-g_3'}^{\ee} N(g_3)^{\beta} N(g_3')^{\beta}} \r)
\end{align*}
En passant à la limite quand $n \to +\infty$, on conclut que
\[
\norm{h}_{\scr B} \leq C.
\]

\textit{Point 2.} C'est une conséquence directe de \eqref{Estonie1} et du fait que les $(g_i)_{i \geq 1}$ sont identiquement distribués,
\[
\sup_{n\geq 0} \frac{\abs{\bf P_t^n h}_{\theta}}{\abs{h}_{\theta}} \leq \bb E \l( N(g_1)^{\theta} \r) < +\infty.
\]

\textit{Point 3.} Par le Lemme \ref{Biélorussie}, pour tout $t \in \bb R$, $n \geq 1$ et $h \in \scr B$
\[
\norm{\bf P_t^n h}_{\scr B} \leq c_{\ee} \l( 1+ \abs{t}^{\ee} \r) \abs{h}_{\theta} + c_{\ee} k_{\ee,\alpha}(h) r_{\ee}^n.
\]
Puisque $r_{\ee} \in (0,1)$, il existe $n_0 \geq 1$ tel que pour tout $n \geq n_0$, $c_{\ee} r_{\ee}^n \leq r < 1$ ce qui démontre le point 3.

\textit{Point 4.} Soit $B$ une partie bornée de $\scr B$. Montrons que $\bf P_t B$ est relativement compact. Soit $(h_n)_{n\geq 0}$ une suite à valeurs dans $B$, on va montrer qu'il existe une sous-suite de $\bf P_t h_n$ qui converge dans $\l( \scr C( \bb X, \bb C), \abs{\cdot}_{\theta} \r)$. Fixons $K$ un compact de $\bb X$. Pour tout $x = (g,\overline{u}) \in K$ et $n \geq 0$, par \eqref{Lettonie},
\begin{equation}
\label{Lituanie}
\abs{\bf P_t h_n (x)} \leq \bb E\l( N(g_1)^{\theta} \r) \abs{h_n}_{\theta}.
\end{equation}
Puisque $(h_n)_{n\geq 0}$ est bornée, on en déduit que $(\bf P_t h_n (x))_{n\geq 0}$ est bornée dans $\bb C$ et donc relativement compact. Montrons que $( \bf P_t h_n )_{n\geq 0}$ est équicontinue en $x \in K$. Pour tout $y = (g',\overline{v}) \in K$ et tout $n \geq 0$,
\begin{align*}
\abs{\bf P_t h_n(x) - \bf P_t h_n(y)} &\leq \abs{\bf P_t h_n(g,\overline{u}) - \bf P_t h_n(g,\overline{v})} + \abs{\bf P_t h_n(g,\overline{v}) - \bf P_t h_n(g',\overline{v})} \\
&\leq k_{\ee,\alpha}\l( \bf P_t h_n \r) d(\overline{u},\overline{v})^{\ee} N(g)^{\alpha} + k_{\ee,\beta}\l( \bf P_t h_n \r) \norm{g-g'}^{\ee} N(g)^{\beta} N(g')^{\beta}.
\end{align*}
Puisque $K$ est compact, il existe $c_K$ tel que $N(g) \leq c_K$ pour tout $g \in \bb G$. Donc en utilisant \eqref{Estonie2} et \eqref{Estonie3},
\begin{align*}
\abs{\bf P_t h_n(x) - \bf P_t h_n(y)} &\leq c_{\ee,K} \l( \abs{t}^{\ee} +1 \r) \norm{h_n}_{\scr B} \l(  d(\overline{u},\overline{v})^{\ee}  +  \norm{g-g'}^{\ee} \r).
\end{align*}
Puisque $(h_n)_{n\geq 0}$ est bornée, on en déduit que la suite $( \bf P_t h_n )_{n\geq 0}$ est équicontinue. Donc par le théorème d'Ascoli-Arzelà, l'ensemble $\{ \bf P_t h_n, n \geq 0 \}$ est relativement compact dans $( \scr C(K,\bb C), \abs{\cdot}_{\theta} )$. Par un procédé d'extraction diagonale, il existe une sous-suite $( \bf P_th_{n_k} )_{k\geq 0}$ et une fonction $\varphi \in \scr C( \bb X, \bb C)$ telles que pour tout compact $K$ de $\bb X$, on a
\[
\sup_{x \in K} \abs{\bf P_th_{n_k}(x) - \varphi(x)} \underset{k\to+\infty}{\longrightarrow} 0.
\]
De plus par \eqref{Lituanie}, pour tout $x \in \bb X$ et tout $n \geq 0$, $\abs{\bf P_t h_n (x)} \leq c_B$. Donc de même, pour tout $x \in \bb X$, $\varphi(x) \leq 1+c_B$. On en déduit que, pour tout $A > 0$ et tout $n \geq 0$,
\[
\sup_{x \in \bb X} \frac{ \abs{\bf P_t h_{n_k}(x) - \varphi(x)} }{N(g)^{\theta}} \leq \sup_{\substack{N(g) \leq A \\ \overline{u} \in \bb P(\bb R^d)}} \abs{\bf P_t h_{n_k}(g,\overline{u}) - \varphi(g,\overline{u})} +  \sup_{\substack{N(g) > A \\ \overline{u} \in \bb P(\bb R^d)}} \frac{ 1+2c_B }{N(g)^{\theta}}.
\]
Puisque $\{ (g,\overline{u}) \in \bb X : N(g) \leq A \}$ est compact, on a donc pour tout $A> 0$,
\[
\sup_{x \in \bb X} \frac{ \abs{\bf P_t h_{n_k}(x) - \varphi(x)} }{N(g)^{\theta}} \leq \frac{ 1+2c_B }{A^{\theta}}.
\]
En prenant la limite lorsque $A \to +\infty$, on conclut que la suite $( \bf P_t h_{n_k} )_{k\geq 0}$ converge dans $\l( \scr C( \bb X, \bb C), \abs{\cdot}_{\theta} \r)$ vers $\varphi$ et donc finalement $\bf P_t B$ est relativement compact dans $\l( \scr C( \bb X, \bb C), \abs{\cdot}_{\theta} \r)$.
\end{proof}

Avant de pouvoir établir M2,
nous avons besoin d'un autre résultat de Le Page \cite{page_theoremes_1982} (Corollaire 1) également énoncé dans Grama, Le Page et Peigné \cite{grama_conditioned_2016}. On rappelle que sous \ref{HypoP1}-\ref{HypoP3}, il existe une unique mesure $\bs \nu$ sur $\bb P(\bb R^d)$ qui soit $\bs \mu$-invariante, c'est-à-dire telle que pour tout fonction $\varphi$ : $\bb P(\bb R^d) \to \bb C$, continue,
\begin{equation}
\label{Malte}
(\bs \mu * \bs \nu)(\varphi) = \int_{\bb X} \varphi(g\cdot \overline{u}) \bs \nu(\dd \overline u) \bs \mu(\dd g) = \int_{\bb P(\bb R^d)} \varphi(\overline{u}) \bs \nu(\dd \overline{u}) = \bs \nu(\varphi).
\end{equation}

\begin{proposition}
\label{Moldavie}
Sous les conditions \ref{HypoP1}-\ref{HypoP3}, pour toute fonction $\varphi$ : $\bb P(\bb R^d) \to \bb C$ on a
\[
\lim_{n\to+\infty} \sup_{\overline{u}\in \bb P(\bb R^d)} \abs{\bb E\l( \varphi(G_n\cdot\overline{u}) \r) - \bs \nu(\varphi)} = 0.
\]
\end{proposition}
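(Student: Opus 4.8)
The plan is to prove the statement for continuous $\varphi$, for which the compactness of $\bb P(\bb R^d)$ guarantees that $\varphi$ is bounded and uniformly continuous. The argument rests on two ingredients: the $n$-fold invariance of $\bs \nu$ and the contraction estimate of Proposition \ref{Finlande}. First I would observe that iterating the invariance relation \eqref{Malte} and using that the $g_i$ are \textit{i.i.d.}\ of law $\bs \mu$ yields, for every $n \geq 1$,
\[
\int_{\bb P(\bb R^d)} \bb E\l( \varphi(G_n \cdot \overline{v}) \r) \bs \nu(\dd \overline{v}) = \bs \nu(\varphi).
\]

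I would then first establish the result for $\varphi$ that is $\ee_0$-Hölder continuous, writing $k_{\ee_0}(\varphi) := \sup_{\overline{u} \neq \overline{v}} \abs{\varphi(\overline{u}) - \varphi(\overline{v})}/d(\overline{u},\overline{v})^{\ee_0} < +\infty$. Combining the displayed identity with the triangle inequality and Jensen's inequality gives
\[
\abs{\bb E\l( \varphi(G_n \cdot \overline{u}) \r) - \bs \nu(\varphi)} \leq k_{\ee_0}(\varphi) \int_{\bb P(\bb R^d)} \bb E\l( d\l( G_n \cdot \overline{u}, G_n \cdot \overline{v} \r)^{\ee_0} \r) \bs \nu(\dd \overline{v}).
\]
By Proposition \ref{Finlande} there exist $n_0$ and $r_{\ee_0} \in (0,1)$ such that for $n$ large $\bb E( d( G_n \cdot \overline{u}, G_n \cdot \overline{v} )^{\ee_0}) \leq c\, r_{\ee_0}^n d(\overline{u},\overline{v})^{\ee_0} \leq c\, r_{\ee_0}^n$, the last step using that the angular distance is bounded by $1$. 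Since this bound is uniform in $\overline{u}$ and $\overline{v}$ and $\bs \nu$ is a probability measure, we obtain $\sup_{\overline{u}} \abs{\bb E( \varphi(G_n \cdot \overline{u})) - \bs \nu(\varphi)} \leq c\, k_{\ee_0}(\varphi) r_{\ee_0}^n$, which tends to $0$.

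Finally, I would pass to a general continuous $\varphi$ by approximation. On the compact metric space $\bb P(\bb R^d)$, the inf-convolutions $\varphi_\lambda(\overline{u}) = \inf_{\overline{w}} ( \varphi(\overline{w}) + \lambda^{-1} d(\overline{u}, \overline{w}) )$ are Lipschitz, hence $\ee_0$-Hölder, and converge uniformly to $\varphi$ as $\lambda \to 0$ by uniform continuity. Writing
\[
\abs{\bb E\l( \varphi(G_n \cdot \overline{u}) \r) - \bs \nu(\varphi)} \leq 2\normt{\varphi - \varphi_\lambda}_\infty + \abs{\bb E\l( \varphi_\lambda(G_n \cdot \overline{u}) \r) - \bs \nu(\varphi_\lambda)},
\]
and using that $\bs \nu$ is a probability measure, one first chooses $\lambda$ to make the first term small uniformly in $\overline{u}$ and $n$, then lets $n \to +\infty$ in the second term via the Hölder case. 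This yields the claimed uniform convergence. The only delicate point is this approximation step: since the Hölder constant $k_{\ee_0}(\varphi_\lambda)$ blows up as $\lambda \to 0$, the order of limits matters, and one must fix $\lambda$ first so that the exponential decay $r_{\ee_0}^n$ wins for each fixed approximation before refining it.
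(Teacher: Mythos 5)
Your proposal cannot be checked against a proof in the paper, because the paper gives none: Proposition \ref{Moldavie} is quoted as an external result (Corollaire 1 of Le Page \cite{page_theoremes_1982}, restated in \cite{grama_conditioned_2016}), and the text immediately moves on to use it. Judged on its own, your argument is correct and is essentially the standard reconstruction of this equidistribution result. The two ingredients are the right ones: iterated $\bs\mu$-invariance, $\bs\mu^{*n}*\bs\nu=\bs\nu$, which lets you write $\bb E\l(\varphi(G_n\cdot\overline{u})\r)-\bs\nu(\varphi)=\int_{\bb P(\bb R^d)}\bb E\l(\varphi(G_n\cdot\overline{u})-\varphi(G_n\cdot\overline{v})\r)\bs\nu(\dd\overline{v})$, and the contraction estimate of Proposition \ref{Finlande}, converted into a bound $c\,r_{\ee_0}^n$ valid for all $n$ and uniform in $\overline{u},\overline{v}$ because $d\leq 1$ --- exactly the same conversion the paper itself performs inside the proof of Lemme \ref{Biélorussie}. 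Your handling of the order of limits in the approximation step (fix $\lambda$ first, then let $n\to+\infty$) is also the correct one. Three small points to tighten. First, the proposition must indeed be read for \emph{continuous} $\varphi$, as you assume: the paper's ``pour toute fonction'' is an abuse, and the invariance \eqref{Malte} on which everything rests is only formulated for continuous functions. Second, the inf-convolution regularization makes sense for real-valued functions only; for $\bb C$-valued $\varphi$ apply it separately to the real and imaginary parts. Third, Lipschitz implies $\ee_0$-Hölder on $\bb P(\bb R^d)$ (whose diameter is at most $1$) only when $\ee_0\leq 1$; this is harmless, since the estimate of Proposition \ref{Finlande} passes to any smaller exponent by Jensen's inequality --- a reduction of the same nature as the paper's ``on peut supposer que $\delta_0/8\leq\ee_0$'' in the proof of Lemme \ref{Biélorussie} --- but it deserves a word.
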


\begin{proposition}[Trou spectral]
\label{Canada2}
Supposons \ref{HypoP1}-\ref{HypoP3}. Alors
\begin{enumerate}[ref=\arabic*, leftmargin=*, label=\arabic*.]
	\item L'application $h \mapsto \bf Ph$ est un opérateur borné de $\scr B$.
	\item Il existe deux constantes $c_1 > 0$ et $c_2 > 0$ telle que
	\[
	\bf P = \Pi+Q,
	\]
	où $\Pi$ est un projecteur unidimensionnel et $Q$ est un opérateur de $\scr B$ tels que $\Pi Q=Q\Pi=0$. De plus pour tout $n\geq 1$,
	\[
	\norm{Q^n}_{\scr B\to\scr B} \leq c_1\e^{-c_2n}.
	\]
\end{enumerate}
\end{proposition}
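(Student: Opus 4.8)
Le plan est d'appliquer le théorème de Ionescu-Tulcea et Marinescu \cite{tulcea_theorie_1950}, dont les hypothèses ont été vérifiées au Lemme \ref{Ukraine}, pour obtenir la quasi-compacité de $\bf P$, puis d'identifier précisément son spectre périphérique grâce à l'ergodicité de la marche projective fournie par la Proposition \ref{Moldavie}.

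\textbf{Bornitude et borne uniforme en puissance.} Le point 1 découle immédiatement du Lemme \ref{Biélorussie} pris en $t=0$, $n=1$ : comme $\abs{h}_{\theta} \leq \norm{h}_{\scr B}$ et $k_{\ee,\alpha}(h) \leq \norm{h}_{\scr B}$, il vient $\norm{\bf P h}_{\scr B} \leq c_{\ee}(1+r_{\ee})\norm{h}_{\scr B}$, donc $\bf P$ est borné sur $\scr B$. Le même lemme donne de plus $\sup_{n\geq 1}\norm{\bf P^n}_{\scr B\to\scr B} \leq 2c_{\ee} < +\infty$ : l'opérateur est \emph{uniformément borné en puissance}, fait que l'on exploitera pour écarter les blocs de Jordan. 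Le Lemme \ref{Ukraine} fournit alors exactement les quatre ingrédients du théorème de Ionescu-Tulcea et Marinescu (inclusion continue $\scr B \hookrightarrow (\scr C(\bb X,\bb C),\abs{\cdot}_{\theta})$ à boules fermées, borne faible uniforme, inégalité de Doeblin--Fortet $\norm{\bf P^k h}_{\scr B}\leq r\norm{h}_{\scr B}+c\abs{h}_{\theta}$, et compacité de $\bf P$ à valeurs dans $(\scr C(\bb X,\bb C),\abs{\cdot}_{\theta})$). On en déduit que $\bf P$ est quasi-compact : son rayon spectral essentiel est $<1$ et sa partie spectrale sur le cercle unité se réduit à des valeurs propres $\lambda_1,\dots,\lambda_m$ distinctes, de module $1$ et de multiplicité finie. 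La borne uniforme en puissance interdit tout bloc de Jordan non trivial (qui forcerait $\norm{\bf P^n}\to+\infty$), d'où la décomposition
\[
\bf P^n = \sum_{j=1}^m \lambda_j^n \Pi_j + Q^n, \qquad \norm{Q^n}_{\scr B\to\scr B} \leq c\, r_1^n,
\]
avec $r_1\in(0,1)$, les $\Pi_j$ de rang fini vérifiant $\Pi_i\Pi_j=\delta_{ij}\Pi_j$ et $\Pi_j Q = Q\Pi_j = 0$.

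\textbf{Identification du spectre périphérique.} On établit ensuite que $\bf P^n h$ converge vers une constante explicite pour tout $h\in\scr B$. En conditionnant par $G_{n-1}$ et en utilisant l'indépendance de $g_n$, on obtient
\[
\bf P^n h(g,\overline u) = \bb E\l( \psi(G_{n-1}g\cdot\overline u) \r), \qquad \psi(\overline w) := \int_{\bb G} h(g',\overline w)\,\bs\mu(\dd g'),
\]
où $\psi$ est continue et bornée sur $\bb P(\bb R^d)$ puisque $\abs{\psi(\overline w)} \leq \abs{h}_{\theta}\int_{\bb G} N(g')^{\theta}\bs\mu(\dd g') < +\infty$ par \ref{HypoP1}. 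En posant $\overline u' = g\cdot\overline u$ et en appliquant la Proposition \ref{Moldavie} à $\psi$, on obtient, uniformément en $(g,\overline u)$, la convergence $\bf P^n h(g,\overline u) \to \bs\nu(\psi) = (\bs\mu\otimes\bs\nu)(h)$. Autrement dit $\bf P^n h$ tend ponctuellement vers la fonction constante $\Pi h := (\bs\mu\otimes\bs\nu)(h)\, e$. Comme $\bf P e = e$ (opérateur markovien) avec $e\in\scr B$ par la Proposition \ref{Canada1}, $1$ est valeur propre périphérique. Réciproquement, si $\bf P h = \lambda h$ avec $\abs{\lambda}=1$ et $h\neq 0$, alors $\lambda^n h(g,\overline u) = \bf P^n h(g,\overline u) \to (\bs\mu\otimes\bs\nu)(h)$ ; la convergence d'une suite de module constant impose $\lambda = 1$ et $h = (\bs\mu\otimes\bs\nu)(h)\,e$, de sorte que l'espace propre associé à $1$ est $\bb C e$, de dimension $1$.

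\textbf{Conclusion.} Il reste à confronter la convergence ponctuelle $\bf P^n h \to \Pi h$ à la décomposition ci-dessus. Comme $Q^n h \to 0$ dans $\scr B$, on a $\sum_{j=1}^m \lambda_j^n \Pi_j h \to \Pi h$ pour tout $h$ ; en prenant des moyennes de Cesàro et en testant contre chaque $\overline{\lambda_{j'}}^{\,n}$, on identifie l'unique valeur propre périphérique à $1$ (les termes $\lambda_j \neq 1$ forcent $\Pi_j = 0$) et le projecteur correspondant à $\Pi$, qui est bien de rang $1$. On obtient finalement $\bf P = \Pi + Q$ avec $\Pi Q = Q\Pi = 0$ et $Q$ de rayon spectral $<1$, d'où $\norm{Q^n}_{\scr B\to\scr B} \leq c_1 \e^{-c_2 n}$ pour des constantes $c_1,c_2>0$ adéquates. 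L'étape la plus délicate est ce passage de l'ergodicité de la marche projective (Proposition \ref{Moldavie}, relative aux seules fonctions de $\overline u$) à la convergence de $\bf P^n h$ pour $h$ dépendant conjointement de $(g,\overline u)$ : l'astuce de moyennage en une étape (l'introduction de $\psi$) est précisément ce qui ramène le problème au cadre de la Proposition \ref{Moldavie} et permet de conclure que le spectre périphérique se réduit à la valeur propre simple $1$.
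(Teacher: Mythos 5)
Votre preuve est correcte et suit essentiellement la même démarche que celle de l'article : le point 1 par le Lemme \ref{Biélorussie} ($t=0$, $n=1$), la quasi-compacité par le Lemme \ref{Ukraine} et le théorème de Ionescu-Tulcea et Marinescu, puis l'identification du spectre périphérique par le moyennage en une étape ($\psi$, noté $\tt h$ dans l'article) combiné à la Proposition \ref{Moldavie}. Vos ajouts (exclusion des blocs de Jordan via la borne uniforme en puissance, moyennes de Cesàro en conclusion) sont corrects mais redondants, puisque l'argument de valeur propre montre déjà directement qu'il n'existe aucun vecteur propre pour $\lambda \neq 1$ et que l'espace propre de $1$ est $\bb C e$.
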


\begin{proof}
\textit{Point 1.} C'est une simple conséquence du Lemme \ref{Biélorussie} pour $n=1$ et $t=0$.

\textit{Point 2.} D'après le Lemme \ref{Ukraine} et le théorème de Ionescu-Tulcea et Marinescu \cite{tulcea_theorie_1950}, on sait que qu'il existe un nombre fini de valeurs propres de module $1$ notées $\ll_1, \dots, \ll_p$ et des opérateurs $\Pi_1, \dots, \Pi_p,Q$ tels que $\bf P = \sum_{i=1}^p \ll_i \Pi_i +Q$ avec $\Pi_i$ des projecteurs unidimensionnels orthogonaux vérifiant $\Pi_i Q = Q \Pi_i = 0$ et le rayon spectral de $Q$ est strictement plus petit que $1$ et donc
\[
\norm{Q^n}_{\scr B\to\scr B} \leq c_1\e^{-c_2n}.
\]
Il ne nous reste plus qu'à montrer $1$ est l'unique valeur propre de module $1$ et qu'elle est simple. Soit $\ll \in \bb C$ une valeur propre de $\bf P$ de module $1$ et soit $h \in \scr B$ un vecteur propre associé. Pour tout $x=(g,\overline{u}) \in \bb X$ et tout $n \geq 1$,
\[
\ll^n h(x) = \bf P^n h (x) = \bb E \l( h( g_n, G_{n-1}g\cdot \overline{u} ) \r).
\]
Posons $\tt h(\overline{v}) = \bb E \l( h(g_1,\overline{v}) \r)$, pour tout $\overline{v} \in \bb P(\bb R^d)$. Par indépendance des $g_i$,
\[
\ll^n h(x) = \bb E \l( \tt h( G_{n-1}g\cdot \overline{u} ) \r)
\]
Or puisque $h \in \scr B$, $\tt h$ est $\ee$-hölderienne : pour tout $\overline{v}$ et $\overline{w} \in \bb P(\bb R^d)$,
\[
\abs{\tt h(\overline{v}) - \tt h(\overline{w})} \leq k_{\ee,\alpha}(h) \bb E \l( N(g_1)^{\alpha} \r) d(\overline{v},\overline{w})^{\ee}.
\]
La fonction $\tt h$ est notamment continue donc par la Proposition \ref{Moldavie},
\[
\ll^n h(x) \underset{n\to +\infty}{\longrightarrow} \bs \nu(\tt h) = \int_{\bb X} h(g_1,\overline{v}) \bs \nu(\dd \overline{v}) \bs \mu( \dd g_1).
\]
Or puisque $h$ est un vecteur propre, par définition, il existe $x_0 \in \bb X$ tel que $h(x_0) \neq 0$ donc $\abs{\ll^n h(x_0)} = \abs{h(x_0)} = \bs \nu(\tt h)$. Par conséquent, $\ll = 1$ et pour tout $x \in \bb X$, $h(x) = \bs \nu(\tt h) e$ est un multiple de la fonction constante égale à $1$. Ceci montre que $1$ est l'unique valeur propre de module $1$ et son sous-espace vectoriel propre et de dimension $1$, ce qui conclut la preuve du point 2.
\end{proof}

\begin{proposition}[Opérateur perturbé]
\label{Canada3}
Supposons \ref{HypoP1}-\ref{HypoP3}. Alors
\begin{enumerate}[ref=\arabic*, leftmargin=*, label= \arabic*.]
	\item Pour tout $\abs{t} \leq \kappa$ l'application $g \mapsto \bf P_tg$ est un opérateur borné sur $\scr B$.
	\item Il existe une constante $C_{\bf P} >0$ telle que, pour tout $n\geq 1$ et $\abs{t} \leq \kappa$,
	\[
	\norm{\bf P_t^n}_{\scr B \to \scr B} \leq C_{\bf P}.
	\]
\end{enumerate}
\end{proposition}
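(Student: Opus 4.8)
Le plan est d'obtenir les deux points comme corollaires directs du Lemme \ref{Biélorussie}, qui contient déjà tout le travail analytique : ce lemme affirme que pour tout $t \in \bb R$, tout $n \geq 1$ et tout $h \in \scr B$, la fonction $\bf P_t^n h$ appartient à $\scr B$ et vérifie
\[
\norm{\bf P_t^n h}_{\scr B} \leq c_{\ee} \l( 1 + \abs{t}^{\ee} \r) \abs{h}_{\theta} + c_{\ee} k_{\ee,\alpha}(h) r_{\ee}^n .
\]
Il ne reste donc qu'à transformer ce majorant, qui fait intervenir les semi-normes $\abs{h}_{\theta}$ et $k_{\ee,\alpha}(h)$ ainsi que les facteurs $\abs{t}^{\ee}$ et $r_{\ee}^n$, en un multiple uniforme de $\norm{h}_{\scr B}$.

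Pour cela, j'observerais d'abord que, par définition de la norme $\norm{h}_{\scr B} = \abs{h}_{\theta} + k_{\ee,\alpha}(h) + k_{\ee,\beta}'(h)$, les deux semi-normes apparaissant dans le majorant sont dominées par la norme complète : $\abs{h}_{\theta} \leq \norm{h}_{\scr B}$ et $k_{\ee,\alpha}(h) \leq \norm{h}_{\scr B}$. J'utiliserais ensuite les deux bornes uniformes $r_{\ee}^n \leq 1$, valable pour tout $n \geq 1$ puisque $r_{\ee} \in (0,1)$, et $\abs{t}^{\ee} \leq \kappa^{\ee} \leq 1$, valable dès que $\abs{t} \leq \kappa$ puisque $\kappa \in (0,1)$. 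On en déduirait alors, pour tout $\abs{t} \leq \kappa$ et tout $n \geq 1$,
\[
\norm{\bf P_t^n h}_{\scr B} \leq c_{\ee} \l( 1 + \kappa^{\ee} \r) \norm{h}_{\scr B} + c_{\ee} \norm{h}_{\scr B} \leq 3 c_{\ee} \norm{h}_{\scr B} .
\]
Le point 1 s'obtiendrait en spécialisant à $n = 1$ (l'opérateur $\bf P_t$ est alors borné, de norme au plus $3 c_{\ee}$), et le point 2 en conservant $n$ arbitraire : on poserait $C_{\bf P} = 3 c_{\ee}$, constante qui ne dépend ni de $n$ ni de $t$.

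L'essentiel de la démonstration ayant déjà été fait dans le Lemme \ref{Biélorussie} — contrôle de l'oscillation de $\e^{itS_n}$, contraction des directions par $(G_k)_{k\geq 0}$ issue de la Proposition \ref{Finlande}, et sommation des séries géométriques en $r_{\ee}$ — il n'y a pas d'obstacle substantiel ici. Le seul point auquel il faut prendre garde est l'uniformité simultanée en $n$ et en $t$ : c'est exactement la restriction $\abs{t} \leq \kappa$ qui permet de remplacer le facteur $\abs{t}^{\ee}$, \emph{a priori} non borné, par la constante $\kappa^{\ee} \leq 1$, et c'est l'appartenance $r_{\ee} \in (0,1)$ qui rend la borne indépendante de $n$. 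Cette proposition n'est donc qu'une reformulation du Lemme \ref{Biélorussie}, isolant précisément la forme (bornitude et bornitude uniforme des itérés) requise par les hypothèses M2 et M3 de \cite{grama_limit_2018-1}.
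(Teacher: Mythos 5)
Votre preuve est correcte et suit exactement la même démarche que l'article, dont la preuve se réduit à « c'est une conséquence immédiate du Lemme \ref{Biélorussie} » ; vous ne faites qu'expliciter les détails laissés implicites (domination des semi-normes $\abs{h}_{\theta}$ et $k_{\ee,\alpha}(h)$ par $\norm{h}_{\scr B}$, bornes $r_{\ee}^n \leq 1$ et $\abs{t}^{\ee} \leq \kappa^{\ee} \leq 1$), ce qui donne bien une constante $C_{\bf P}$ uniforme en $n$ et en $t$.
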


\begin{proof}
C'est une conséquence immédiate du Lemme \ref{Biélorussie}.
\end{proof}

\subsection{Preuve de M4}

On pose pour tout $h \in \scr B$,
\begin{equation}
\label{Chypre}
\tbs \nu (h) = \int_{\bb X} h(g_1,\overline{v}) \bs \nu(\dd \overline{v}) \bs \mu( \dd g_1).
\end{equation}

\begin{proposition}[Intégrabilité locale]
\label{Canada4}
Supposons \ref{HypoP1}.
L'espace de Banach $\scr B$ contient une suite de fonctions positives $\tt N, \tt N_1, \tt N_2, \dots $ telle que :
\begin{enumerate}[ref=\arabic*, leftmargin=*, label=\arabic*.]
\item Il existe $p_{max} > 2$ et $\gamma > 0$ telles que pour tout $x\in \bb X$,
	\[
	\max \left\{ \abs{\rho(x)}^{1+\gamma}, \norm{\bs \delta_x}_{\scr B'}, \bb E_x^{1/p_{max}} \left( \tt N\l( X_n \r)^{p_{max}} \right) 
	\right\} \leq c \left( 1+\tt N(x) \right)
	\]
et
	\[
	\tt N(x) \bbm 1_{\{ \tt N(x) > l\}} \leq \tt N_l(x), \quad \text{pour tout} \quad l\geq 1.
	\]
\item Il existe $c > 0$ telle que pour tout $l\geq 1$,
	\[
	\norm{\tt N_l}_{\scr B} \leq  c.
	\]
\item Il existe $\delta>0$ et $c > 0$ telles que pour tout $l\geq 1$,
	\[
	 \abs{\tbs \nu \left( \tt N_l \right)} \leq \frac{c}{l^{1+\delta}}.
	\]
\end{enumerate}
\end{proposition}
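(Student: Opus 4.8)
\textit{Plan de preuve.} L'idée est de choisir explicitement les fonctions. Comme \eqref{Bulgarie} donne $\norm{\bs\delta_x}_{\scr B'} \leq N(g)^{\theta}$ et que, réciproquement, tout élément de $\scr B$ est majoré par un multiple de $N(g)^{\theta}$ via $\abs{\cdot}_{\theta}$, la fonction $\tt N$ est contrainte d'être d'ordre $N(g)^{\theta}$. On pose donc
\[
\tt N(g,\overline u) := N(g)^{\theta}, \qquad \tt N_l(g,\overline u) := N(g)^{\theta}\, \chi\l( N(g)/l^{1/\theta} \r),
\]
où $\chi : [0,+\infty) \to [0,1]$ est une fonction fixée, $1$-lipschitzienne, nulle sur $[0,1/2]$ et égale à $1$ sur $[1,+\infty)$. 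Ces fonctions sont continues et positives. Par construction, si $N(g) > l^{1/\theta}$ alors $\chi(N(g)/l^{1/\theta}) = 1$, et l'on obtient immédiatement la domination $\tt N(x)\, \bbm 1_{\{\tt N(x) > l\}} \leq \tt N_l(x)$ requise au point~1.

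La première étape consiste à montrer que $\tt N$ et les $\tt N_l$ appartiennent à $\scr B$, avec $\norm{\tt N_l}_{\scr B} \leq c$ uniformément en $l$. Comme ces fonctions ne dépendent pas de la direction $\overline u$, on a aussitôt $k_{\ee,\alpha}(\tt N) = k_{\ee,\alpha}(\tt N_l) = 0$, et $\abs{\tt N_l}_{\theta} = \sup \chi(\cdot) \leq 1$. Il reste à contrôler $k_{\ee,\beta}'$. Le point de départ est l'estimée $\abs{N(g) - N(g')} \leq N(g)N(g')\norm{g-g'}$, qui découle de $\norm{g^{-1} - g'^{-1}} \leq \norm{g^{-1}}\norm{g-g'}\norm{g'^{-1}}$. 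En écrivant $\tt N_l = F \circ N$ avec $F(t) = t^{\theta}\chi(t/l^{1/\theta})$ et en majorant $\abs{F'(t)} \leq (\theta+1)\, t^{\theta-1}$ (le terme issu de $\chi'$ étant supporté sur $t \in [l^{1/\theta}/2, l^{1/\theta}]$, où $t^{\theta}/l^{1/\theta} \leq t^{\theta-1}$), on obtient une estimée de type lipschitzien de la forme $\abs{\tt N_l(g,\overline u) - \tt N_l(g',\overline u)} \leq c\, N(g)^{a}N(g')^{a}\norm{g-g'}$ avec $a = \max(1,\theta)$. En l'interpolant avec la majoration triviale $\abs{\tt N_l(g,\overline u) - \tt N_l(g',\overline u)} \leq 2\, N(g)^{\theta}N(g')^{\theta}$ grâce à $\min(s,t) \leq s^{\ee}t^{1-\ee}$, on obtient un exposant sur $N(g)$ et $N(g')$ valant $\ee a + \theta(1-\ee)$, qui reste $\leq \beta$ dans les deux cas $\theta \geq 1$ et $\theta < 1$. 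Ceci donne $k_{\ee,\beta}'(\tt N_l) \leq c$ uniformément en $l$ (et de même pour $\tt N$), d'où le point~2.

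La deuxième étape regroupe les trois majorations du point~1 et le point~3. La borne $\norm{\bs\delta_x}_{\scr B'} \leq N(g)^{\theta} = \tt N(x)$ est exactement \eqref{Bulgarie}. Comme $\abs{\rho(g,\overline u)} \leq \log N(g)$ et que le logarithme croît moins vite que toute puissance, on a $\abs{\rho(x)}^{1+\gamma} \leq c\l(1 + N(g)^{\theta}\r)$ dès que $(1+\gamma)\eta \leq \theta$ pour un $\eta > 0$, de sorte que $\gamma = 1$ convient. Puisque $\tt N(X_n) = N(g_n)^{\theta}$ ne dépend que de $g_n$, de loi celle de $g_1$, on a $\bb E_x^{1/p_{max}}\l(\tt N(X_n)^{p_{max}}\r) = \bb E\l(N(g_1)^{\theta p_{max}}\r)^{1/p_{max}}$, quantité finie et constante en $n$ dès que $\theta p_{max} \leq \delta_0$, soit $p_{max} \leq 8/3$ ; on choisit $p_{max} = 8/3 > 2$. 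Enfin, comme $\tt N_l$ ne dépend pas de $\overline u$, \eqref{Chypre} donne $\tbs\nu(\tt N_l) = \int_{\bb G} N(g_1)^{\theta}\chi(N(g_1)/l^{1/\theta})\, \bs\mu(\dd g_1)$ ; en majorant $\chi(\cdot) \leq \bbm 1_{\{N(g_1) > l^{1/\theta}/2\}}$ puis $N(g_1)^{\theta} \leq N(g_1)^{\delta_0}(l^{1/\theta}/2)^{-(\delta_0-\theta)}$ sur cet ensemble, on obtient par \ref{HypoP1}
\[
\tbs\nu(\tt N_l) \leq c\, l^{-(\delta_0-\theta)/\theta} = c\, l^{-5/3},
\]
d'où le point~3 avec $\delta = 2/3$.

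Le point le plus délicat est la borne uniforme en $l$ sur $k_{\ee,\beta}'(\tt N_l)$ : il faut s'assurer que le terme de dérivée du seuil ne dégrade pas l'exposant, ce qui repose sur le fait que $\chi'$ est localisé autour de $N(g) \approx l^{1/\theta}$ et sur l'interpolation entre l'estimée lipschitzienne et l'estimée bornée. Le reste est une vérification directe, le gain $\delta_0 - \theta = 5\delta_0/8$ entre l'exposant $\theta$ de $\tt N$ et l'intégrabilité $\delta_0$ de \ref{HypoP1} fournissant simultanément la décroissance en $l$ du point~3 et la marge sur l'exposant de moment $p_{max}$.
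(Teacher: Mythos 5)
Your proposal is correct and takes essentially the same approach as the paper: an explicit $\tt N$ of order $N(g)^{\theta}$ together with truncated versions, membership in $\scr B$ obtained from a Lipschitz estimate in the $g$-variable interpolated against the trivial bound $2N(g)^{\theta}N(g')^{\theta}$, and the same parameter choices $\gamma=1$, $p_{max}=8/3$, $\delta=2/3$. The only differences are cosmetic: your cutoff is multiplicative, $\chi\l(N(g)/l^{1/\theta}\r)$, where the paper uses $\phi_l(\tt N)\,\tt N$ with a ramp on $[l-1,l]$, and you handle the case $\theta>1$ directly instead of assuming, as the paper does without loss of generality, that $\delta_0\leq 8/3$ so that $\theta\leq 1$.
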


\begin{proof}
Pour tout $x=(g,\overline{u}) \in \bb X$ on pose $\tt N(x) = \l( \norm{g} + \norm{g^{-1}} \r)^{\theta}$. Pour tout $t \geq 0$ et tout $l \geq 1$, on considère
\[
\phi_l(t) = \left\{ \begin{array}{ccl} 0 & \text{si} & t \leq l-1 \\ t-(l-1) & \text{si} & t \in [l-1,l] \\ 1 & \text{si} & t \geq l \end{array} \right.
\]
et on pose pour tout $x \in \bb X$ et $l \geq 1$, $\tt N_l(x) = \phi_l\l( \tt N(x) \r) \tt N(x)$.

Montrons que $\tt N \in \scr B$. Il est facile de voir que $\abs{\tt N}_{\theta} \leq 2^{\theta}$, que $k_{\ee,\alpha}(\tt N) = 0$. De plus pour tout $(g,g') \in \bb G$ et $\overline{u} \in \bb P(\bb R^d)$, par le théorème des accroissements finis,
\[
\abs{\tt N(g,\overline{u}) - \tt N(g',\overline{u})} \leq \theta \sup_{\xi \geq 1} \xi^{\theta-1} \abs{ \norm{g} + \norm{g^{-1}} - \norm{g'} - \norm{(g')^{-1}} }
\]
Sans perte de généralité, on peut supposer que $\delta_0 \leq 8/3$ et donc
\[
\abs{\tt N(g,\overline{u}) - \tt N(g',\overline{u})} \leq \theta \l( \norm{g-g'} + \norm{g^{-1}(g-g')(g')^{-1}} \r) \leq 2\theta \norm{g-g'} N(g)N(g').
\]
D'autre part,
\[
\abs{\tt N(g,\overline{u}) - \tt N(g',\overline{u})} \leq 2^{\theta} N(g)^{\theta} + 2^{\theta} N(g')^{\theta} \leq 2^{\theta+1} N(g)^{\theta} N(g')^{\theta}.
\]
Par conséquent,
\begin{equation}
\label{Roumanie}
\abs{\tt N(g,\overline{u}) - \tt N(g',\overline{u})} \leq 2^{(\theta+1)(1-\ee)+\ee} \norm{g-g'}^{\ee} N(g)^{\ee + (1-\ee)\theta} N(g')^{\ee + (1-\ee)\theta}.
\end{equation}
Or $\ee + (1-\ee)\theta \leq 4\ee \leq \beta$ et donc
\[
k_{\ee,\beta}'\l( \tt N \r) \leq 2^{\theta+1}.
\]
Ainsi $\tt N \in \scr B$ et $\norm{\tt N}_{\scr B} \leq 2^{\theta+2}$.

Montrons maintenant que pour tout $l \geq 1$, $\tt N_l \in \scr B$. Soit $l \geq 1$. On remarque facilement que pour tout $x \in \bb X$, $\tt N_l(x) \leq \tt N(x)$ et donc $\abs{\tt N_l}_{\theta} \leq \abs{\tt N}_{\theta} \leq 2^{\theta}$. Puisque pour tout $g \in \bb G$, la fonction $\overline{u} \mapsto \tt N(g,\overline{u})$ est constante, il en est de même pour $\overline{u} \mapsto \tt N_l(g,\overline{u})$ et donc $k_{\ee,\alpha}(\tt N_l) = 0$. Pour tout $(g,g') \in \bb G$ et $\overline{u} \in \bb P(\bb R^d)$,
\begin{align*}
\abs{\tt N_l(g,\overline{u}) - \tt N_l(g',\overline{u})} &\leq \tt N(g,\overline{u}) \abs{\phi_l\l( \tt N(g,\overline{u}) \r) - \phi_l\l( \tt N(g',\overline{u}) \r)} + \norm{\phi_l}_{\infty} \abs{\tt N(g,\overline{u}) - \tt N(g',\overline{u})} \\
&\leq \l( 2^{\theta} N(g)^{\theta} \norm{\phi_l'}_{\infty} + 1 \r) \abs{\tt N(g,\overline{u}) - \tt N(g',\overline{u})}.
\end{align*}
En utilisant \eqref{Roumanie},
\begin{align*}
\abs{\tt N_l(g,\overline{u}) - \tt N_l(g',\overline{u})} &\leq \l( 2^{\theta} + 1 \r) N(g)^{\theta} 2^{\theta+1} \norm{g-g'}^{\ee} N(g)^{\ee + \theta} N(g')^{\ee + \theta} \\
&\leq 2^{2\theta+2} \norm{g-g'}^{\ee} N(g)^{\ee + 2\theta} N(g')^{\ee + 2\theta}.
\end{align*}
Finalement, puisque $\ee + 2\theta = 7\ee = \beta$, on conclut que $k_{\ee,\beta}'(N_l) \leq 2^{2\theta+2}$, que $N_l \in \scr B$ et que
\begin{equation}
\label{Grèce}
\norm{N_l}_{\scr B} \leq 2^{2\theta+3}.
\end{equation}

\textit{Point 1.} On rappelle que pour tout $x=(g,\overline{u}) \in \bb X$, on a $\rho(x) = \log \l( \norm{gu} \r)$. Soit $\gamma > 0$ (par exemple $\gamma =1$). Si $\norm{gu} \geq 1$, alors $\abs{\rho(x)}^{1+\gamma} \leq \log \l( \norm{g} \r)^{1+\gamma} \leq c_{\gamma,\theta} \norm{g}^{\theta}$. Si $\norm{gu} \leq 1$, alors $\abs{\rho(x)}^{1+\gamma} \leq -\log \l( \norm{g^{-1}}^{-1} \r)^{1+\gamma} \leq c_{\gamma,\theta} \norm{g^{-1}}^{\theta}$. Dans tous les cas,
\[
\abs{\rho(x)}^{1+\gamma} \leq c_{\gamma,\theta} \tt N(x).
\]
Dans \eqref{Bulgarie}, on a vu que $\norm{\bs \delta_x}_{\scr B'} \leq N(g)^{\theta} \leq \tt N(x)$, pour tout $x=(g,\overline{u}) \in \bb X$. On choisit $p_{max} = 8/3 > 2$. On a alors $\theta p_{\max} = \delta_0$ et donc
\[
\bb E_x^{1/p_{max}} \l( \tt N(X_n)^{p_{\max}} \r) \leq 2^{\theta} \bb E^{1/p_{max}} \l( N(g_n)^{\theta p_{\max}} \r) = c_{\delta_0} <+\infty.
\]
Enfin, par définition de $\phi_l$, il est clair que pour tout $l \geq 1$ et tout $x \in \bb X$, $\tt N(x) \bbm 1_{\{ \tt N(x) > l\}} \leq \tt N_l(x)$, ce qui achève de démontrer le point 1.

\textit{Point 2.} Ce point a été démontré en \eqref{Grèce}.

\textit{Point 3.} Prenons $\delta = 2/3$. Par définition de $\tt N_l$, on a pour tout $l \geq 2$,
\begin{align*}
\tbs \nu\l( \tt N_l \r) = \int_{\bb X} \tt N_l(g_1,\overline{v}) \bs \nu(\dd \overline{v}) \bs \mu( \dd g_1) &\leq \int_{\bb X}  \tt N(g_1,\overline{v}) \bbm 1_{\{ \tt N(g_1,\overline{v}) \geq l-1 \}} \bs \nu(\dd \overline{v}) \bs \mu( \dd g_1) \\
&\leq \int_{\bb X} \frac{\tt N(g_1,\overline{v})^{2+\delta}}{(l-1)^{1+\delta}} \bs \nu(\dd \overline{v}) \bs \mu( \dd g_1) \\
&\leq \frac{2^{\theta(2+\delta)}}{(l-1)^{1+\delta}} \int_{\bb G} N(g_1)^{\theta(2+\delta)} \bs \mu( \dd g_1)
\end{align*}
Comme $\theta(2+\delta) = \delta_0$, on conclut que pour tout $l \geq 1$, $\tbs \nu\l( \tt N_l \r) \leq c_{\delta_0}/l^{5/3}$.
\end{proof}

Soient $\mu$ et $\sigma^2$ définis respectivement par le point 1 et 2 de la Proposition 2.1. 
La mesure $\tbs \nu$ définie par \eqref{Chypre} est $\bf P$-invariante. En effet pour toute fonction $h$ : $\bb X \to \bb C$ continue et bornée,
\begin{align*}
\label{Albanie}
\tbs \nu\l( \bf Ph \r) = \int_{\bb X} \bf P h(g,\overline{u}) \bs \nu(\dd \overline{u}) \bs \mu( \dd g) &= \int_{\bb X\times \bb G} h(g_1,g\cdot\overline{u}) \bs \mu( \dd g_1)\bs \mu( \dd g)\bs \nu(\dd \overline{u}) \\
&= \int_{\bb X} \tt h(g\cdot\overline{u}) \bs \mu( \dd g)\bs \nu(\dd \overline{u}).
\end{align*}
où $\tt h$ est définie par $\tt h(\overline{v}) = \int_{\bb G} h(g_1,\overline{v}) \bs \mu( \dd g_1)$, pour tout $\overline{v} \in \bb P(\bb R^d)$. Puisque $\bs \nu$ est $\mu$-invariante (cf \eqref{Malte}),
\[
\tbs \nu\l( \bf Ph \r) = \int_{\bb P(\bb R^d)} \tt h(\overline{u}) \bs \nu(\dd \overline{u}) = \tbs \nu (h).
\]
Notons également que $\tbs \nu(\tt N^2) \leq 2^{2\theta} \bf \mu (N^{2\theta})$ et comme $2\theta \leq \delta_0$ on en déduit que $\tbs \nu(\tt N^2) <+\infty$. Ainsi l'équation (2.5) 
est satisfaite.

\begin{proposition}[Centrage et non-dégénérescence]
\label{Canada5}
Supposons \ref{HypoP1}-\ref{HypoP4}.
La marche $( S_n )_{n\geq 1}$ est centrée :
\[
\tbs \nu (\rho) = \mu = 0,
\]
et non-dégénérée :
\[
\sigma^2 = \Var_{\tbs \nu} \l( \rho(X_1) \r) + 2\sum_{k=2}^{+\infty} \Cov_{\tbs \nu} \l( \rho(X_1), \rho(X_k) \r) > 0.
\]
\end{proposition}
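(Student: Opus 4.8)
Le plan est de séparer le centrage, immédiat, de la non-dégénérescence, qui concentre la difficulté.

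\emph{Centrage.} Par définition \eqref{Chypre}, la quantité $\tbs\nu(\rho) = \int_{\bb X}\rho(g_1,\overline v)\,\bs\nu(\dd\overline v)\,\bs\mu(\dd g_1)$ coïncide avec le plus grand exposant de Lyapunov, nul par \ref{HypoP4}. Comme $\mu = \tbs\nu(\rho)$ (point 1 de la Proposition 2.1), on obtient $\mu = 0$ sans calcul supplémentaire.

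\emph{Réduction à un cobord.} On remarque d'abord que $\sigma^2 = \lim_n n^{-1}\Var_{\tbs\nu}(S_n)\geq 0$ et que la série la définissant converge : puisque $\tbs\nu(\rho) = 0$, on a $\Cov_{\tbs\nu}(\rho(X_1),\rho(X_k)) = \tbs\nu(\rho\cdot\bf P^{k-1}\rho) = \tbs\nu(\rho\cdot Q^{k-1}\rho)$ en décomposant $\bf P^{k-1} = \Pi + Q^{k-1}$ via le trou spectral de la Proposition \ref{Canada2} et en utilisant $\Pi\rho = \tbs\nu(\rho)e = 0$ ; la domination $\abs{\rho}\leq c(1+\tt N)$ avec $\tt N\in\scr B$ (Proposition \ref{Canada4}) jointe à $\norm{Q^{k-1}}_{\scr B\to\scr B}\leq c_1\e^{-c_2(k-1)}$ assure la décroissance exponentielle des covariances. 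Il reste à exclure $\sigma^2 = 0$, ce que l'on fait par l'absurde. Grâce au trou spectral et à cette même domination (qui permet de résoudre l'équation de Poisson pour un second membre dominé par un élément de $\scr B$), on dispose de $\psi$ vérifiant $\psi - \bf P\psi = \rho$. La décomposition de Gordin $S_n = \sum_{k=1}^n\l(\psi(X_k)-\bf P\psi(X_{k-1})\r) + \psi(X_0) - \psi(X_n)$ fait apparaître des accroissements de martingale stationnaires sous $\tbs\nu$, d'où $\sigma^2 = \tbs\nu\l(\bb E\l[(\psi(X_1)-\bf P\psi(X_0))^2\mid X_0\r]\r)$ ; ainsi $\sigma^2 = 0$ impose $\psi(X_1) = \bf P\psi(X_0)$ presque sûrement, c'est-à-dire, en réinjectant l'équation de Poisson, la relation de cobord $\rho(X_0) = \psi(X_0) - \psi(X_1)$.

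\emph{Passage à un cobord continu sur l'espace projectif.} Sous $\tbs\nu$, on a $X_0 = (g,\overline u)$ avec $g\sim\bs\mu$ et $\overline u\sim\bs\nu$ indépendants, et $X_1 = (g_1,g\cdot\overline u)$, de sorte que $\rho(g,\overline u) = \psi(g,\overline u) - \psi(g_1,g\cdot\overline u)$ presque partout. Le membre de gauche ne dépendant pas de $g_1$, le terme $\psi(g_1,g\cdot\overline u)$ est presque sûrement constant en $g_1$ ; en posant $\varphi(\overline w) = \int_{\bb G}\psi(g_1,\overline w)\,\bs\mu(\dd g_1)$ et en utilisant l'invariance de $\bs\nu$, on obtient
\[
\rho(g,\overline u) = \varphi(\overline u) - \varphi(g\cdot\overline u)\qquad \bs\mu\otimes\bs\nu\text{-p.s.}
\]
La fonction $\varphi$ étant continue, cette égalité s'étend par continuité à tout $g\in\supp\bs\mu$ et $\overline u\in\supp\bs\nu$, puis à tout $\gamma\in\Gamma_\mu$ par la propriété de cocycle de $\rho$ et l'invariance de $L:=\supp\bs\nu$.

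\emph{Obstacle principal : la contradiction.} C'est ici qu'interviennent de façon essentielle \ref{HypoP2} et \ref{HypoP3}. Par \ref{HypoP3}, $\Gamma_\mu$ contient une suite contractante $(\gamma_n)$ ; quitte à extraire, $\gamma_n/\norm{\gamma_n}$ converge vers une matrice de rang un $v\mapsto\scal{y}{v}x$, avec $\norm{x}=\norm{y}=1$. Pour $\overline v,\overline w\in L$ hors de $y^\perp$, on a $\gamma_n\cdot\overline v,\gamma_n\cdot\overline w\to\overline x$, et en passant à la limite dans $\rho(\gamma_n,\overline v)-\rho(\gamma_n,\overline w) = \varphi(\overline v)-\varphi(\overline w) - \varphi(\gamma_n\cdot\overline v)+\varphi(\gamma_n\cdot\overline w)$, on obtient
\[
\varphi(\overline v) = \log\frac{\abs{\scal{y}{v}}}{\norm{v}} + C\qquad\text{sur } L\setminus y^\perp.
\]
En réinjectant cette forme dans la relation de cobord pour un élément arbitraire $\gamma\in\Gamma_\mu$, les normes $\norm{\gamma v}$ et $\norm{v}$ se simplifient et il vient $\abs{\scal{y}{\gamma v}} = \abs{\scal{y}{v}}$, soit $\abs{\scal{\gamma^{*}y}{v}} = \abs{\scal{y}{v}}$ pour tout $\overline v\in L$. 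Comme $L$ engendre $\bb R^d$ par \ref{HypoP2}, ceci force $\gamma^{*}y = \pm y$ pour tout $\gamma\in\Gamma_\mu$, donc l'hyperplan $y^\perp$ est un sous-espace propre invariant par $\Gamma_\mu$, en contradiction avec l'irréductibilité forte \ref{HypoP2}. On conclut $\sigma^2 > 0$. Les points délicats seront de justifier rigoureusement le cadre de martingale pour l'observable non bornée $\rho$ (via la domination par $\tt N\in\scr B$) et l'extension du cobord à $\Gamma_\mu$ tout entier ; la contradiction finale, elle, repose entièrement sur la conjonction de la contraction \ref{HypoP3} et de l'irréductibilité forte \ref{HypoP2}.
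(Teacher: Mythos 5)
Pour le centrage, votre argument est celui de l'article : \ref{HypoP4} dit exactement que $\tbs \nu(\rho)=0$, et l'identification $\mu=\tbs\nu(\rho)$ vient du point 1 de la Proposition 2.1 de \cite{grama_limit_2018-1}. Pour la non-dégénérescence, en revanche, vous suivez une route réellement différente de celle du texte : l'article se contente d'invoquer le Théorème 2 de Le Page \cite{page_theoremes_1982}, selon lequel, sous \ref{HypoP1}-\ref{HypoP4}, $\frac{1}{n}\bb E_x ( S_n^2 )$ converge vers un réel $\tt\sigma^2>0$, puis identifie $\sigma^2=\tt\sigma^2$ par le point 2 de la Proposition 2.1. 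Vous reconstruisez au contraire la démonstration de ce théorème : équation de Poisson et décomposition en martingale, réduction de $\sigma^2=0$ à une relation de cobord $\rho(g,\overline{u})=\varphi(\overline{u})-\varphi(g\cdot\overline{u})$, forme explicite de $\varphi$ tirée de la propriété de contraction \ref{HypoP3}, et contradiction avec l'irréductibilité forte \ref{HypoP2}. C'est l'argument classique (Furstenberg, Le Page, Bougerol--Lacroix) ; il rend la preuve autonome et montre précisément où interviennent \ref{HypoP2} et \ref{HypoP3}, au prix d'une longueur bien supérieure à la simple citation choisie par l'article.

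Deux maillons de votre chaîne demandent toutefois à être consolidés. D'abord, la domination $\abs{\rho}\leq c(1+\tt N)$ ne suffit ni pour la décroissance des covariances ni pour résoudre l'équation de Poisson : le trou spectral de la Proposition \ref{Canada2} est une estimation en norme d'opérateur sur $\scr B$, qui ne s'applique qu'à des fonctions appartenant à $\scr B$. Il faut donc vérifier que $\rho\in\scr B$, ce qui est vrai : $\abs{\rho(g,\overline{u})}\leq \log N(g)\leq c_{\theta}N(g)^{\theta}$, et en interpolant cette borne avec \eqref{Islande} d'une part, avec la borne lipschitzienne en $g$, $\abs{\rho(g,\overline{u})-\rho(g',\overline{u})}\leq \norm{g-g'}N(g)N(g')$, d'autre part, on obtient $k_{\ee,\alpha}(\rho)<+\infty$ et $k_{\ee,\beta}'(\rho)<+\infty$ puisque $\theta+2\ee=\alpha$ et $\theta+\ee\leq\beta$ ; cela fournit du même coup la continuité de $\psi$, donc de $\varphi$, utilisée ensuite. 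Le second point est plus sérieux : la conclusion finale ne découle pas du seul fait que $L=\supp\bs\nu$ engendre $\bb R^d$. La relation $\abs{\scal{\gamma^{*}y}{v}}=\abs{\scal{y}{v}}$ sur $L$ signifie seulement que $L$ (privé de deux hyperplans) est contenu dans $(\gamma^{*}y-y)^{\perp}\cup(\gamma^{*}y+y)^{\perp}$, et une partie génératrice peut parfaitement être contenue dans une réunion de deux hyperplans : dans $\bb R^2$, $\{(1,1),(1,-1)\}$ engendre et vérifie $\abs{\scal{e_2}{v}}=\abs{\scal{e_1}{v}}$ alors que $e_2\neq\pm e_1$. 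Il faut ici le fait classique, conséquence de \ref{HypoP2}, que $\bs\nu$ ne charge aucun sous-espace projectif propre, de sorte que $\supp\bs\nu$ ne peut être recouvert par une réunion finie d'hyperplans. Signalons enfin une coquille sans gravité : avec $\psi-\bf P\psi=\rho$ et les accroissements $\psi(X_k)-\bf P\psi(X_{k-1})$, les termes de bord de la décomposition de Gordin sont $\bf P\psi(X_0)-\bf P\psi(X_n)$ et non $\psi(X_0)-\psi(X_n)$. Une fois ces points réparés, votre démonstration est correcte.
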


\begin{proof}
L'hypothèse \ref{HypoP4} nous dit exactement que $\tbs \nu (\rho) = 0$. En ce qui concerne la non-dégénérescence, par le théorème 2 de Le Page \cite{page_theoremes_1982}, sous les hypothèses \ref{HypoP1}-\ref{HypoP4}, on sait que $\frac{1}{n} \bb E_x \l( S_n^2 \r)$ converge vers un réel strictement positif noté $\tt \sigma^2 > 0$. Donc par le point 2 de la Proposition 2.1, 
$\sigma^2 = \tt \sigma^2 > 0$.
\end{proof}

\subsection{Formulation des résultats}

A travers les Propositions \ref{Canada1}, \ref{Canada2}, \ref{Canada3} et \ref{Canada4}, nous avons montré que les Hypothèses M1-M5 
était vérifiées et donc les Théorèmes 2.2-2.5 
précisent le comportement de la marche aléatoire $( S_n )_{n\geq 1}$ associée.

\begin{proposition}
Supposons \ref{HypoP1}-\ref{HypoP4}. Alors les résultats des Théorèmes 2.2-2.5 
s'appliquent à la marche aléatoire issue du produit de matrices $S_n = \log \l( \norm{G_ng \cdot x} \r)$.
\end{proposition}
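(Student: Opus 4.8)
Le plan est purement synthétique : il s'agit de rassembler les vérifications des Hypothèses M1--M5 effectuées dans les sous-sections précédentes, puis de constater que le cadre abstrait de \cite{grama_limit_2018-1} s'applique sans modification. Les Théorèmes 2.2--2.5 n'ayant pour seules prémisses que les Hypothèses M1--M5, je procéderais hypothèse par hypothèse. La Proposition \ref{Canada1} établit M1 (les axiomes de l'espace de Banach $\scr B$, l'appartenance des masses de Dirac au dual $\scr B'$, l'inclusion dans $L^1$ et la stabilité par multiplication par $\e^{it\rho}$). Les Propositions \ref{Canada2} et \ref{Canada3} fournissent M2 et M3, à savoir le trou spectral de $\bf P$ et le contrôle uniforme en $n$ des puissances de l'opérateur perturbé $\bf P_t$ pour $\abs{t}\leq\kappa$. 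La Proposition \ref{Canada4} donne M4 (l'intégrabilité locale), tandis que la Proposition \ref{Canada5}, seule à faire intervenir \ref{HypoP4}, donne M5 (le centrage $\tbs\nu(\rho)=0$ et la non-dégénérescence $\sigma^2>0$).

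Il faudrait ensuite relier la marche markovienne abstraite $S_n = \rho(X_1)+\dots+\rho(X_n)$ à la quantité annoncée dans l'énoncé. En utilisant le caractère de cocycle de $\rho$ et la décomposition télescopique déjà rappelée en introduction, on obtient
\[
S_n = \log\l( \norm{G_n g v} \r) - \log\l( \norm{g v} \r),
\]
où $v$ est un représentant de la direction initiale. La marche $S_n$ coïncide donc avec $\log\l( \norm{G_n g \cdot x} \r)$ à une constante additive déterministe et indépendante de $n$ près ; cette translation ne modifie aucune des conclusions asymptotiques des Théorèmes 2.2--2.5, de sorte qu'il suffit de leur appliquer la chaîne $( X_n )_{n\geq 0}$ et sa marche associée.

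Le point véritablement délicat de tout l'argument --- et la motivation du nouvel espace $\scr B$ --- se concentre dans la vérification de M4 par la Proposition \ref{Canada4}. Là où l'espace de \cite{grama_conditioned_2016} est inclus dans les fonctions bornées, notre norme tolère une croissance en $N(g)^{\theta}$, ce qui rend possible la domination $\abs{\rho(x)}^{1+\gamma}\leq c\,\tt N(x)$ par la fonction non bornée $\tt N \in \scr B$ ; c'est exactement ce contrôle, couplé à l'estimée de queue $\abs{\tbs\nu(\tt N_l)}\leq c\,l^{-(1+\delta)}$, qui rend M4 compatible avec le choix $f=\rho$. Une fois M1--M5 acquises, l'application des Théorèmes 2.2--2.5 est immédiate, ce qui achève la démonstration.
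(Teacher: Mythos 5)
Votre démonstration est correcte et suit essentiellement la même démarche que l'article : celui-ci se contente en effet de constater, juste avant l'énoncé, que les Propositions \ref{Canada1}, \ref{Canada2}, \ref{Canada3}, \ref{Canada4} (et \ref{Canada5}) établissent les Hypothèses M1--M5, ce qui permet d'appliquer directement les Théorèmes 2.2--2.5 de \cite{grama_limit_2018-1}. Votre remarque supplémentaire sur l'identification télescopique $S_n = \log\l(\norm{G_n g v}\r) - \log\l(\norm{gv}\r)$ ne fait qu'expliciter ce que l'article avait déjà posé dans ses rappels de notations.
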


Détaillons les quelques points faisant appel à la fonction $\tt N$. Pour tout $\gamma > 0$, on définit
\[
\scr D_{\gamma}' := \left\{ (x,y) \in \bb X \times \bb R : \exists n_0 \geq 1, \; \bb P_x \l( y+S_{n_0} > \gamma \,,\, \tau_y > n_0 \r) > 0 \right\}
\]

\begin{proposition}
\label{Macédoine1}
Supposons \ref{HypoP1}-\ref{HypoP4}. 
\begin{enumerate}[ref=\arabic*, leftmargin=*, label=\arabic*.]
	\item La fonction $V$ du Théorème 2.2 
	vérifie, pour tout $y \in \bb R$, $x \in \bb X$ et $\delta > 0$,
	\[
	(1-\delta) \max(y,0) - c_{\delta} \leq V(x,y) \leq (1+\delta) \max(y,0) + c_{\delta}.
	\]
	\item Il existe $\gamma_0' > 0$ tel que pour tout $\gamma > \gamma_0'$,
	\[
	\supp(V) = \scr D_{\gamma}'.
	\]
\end{enumerate}
\end{proposition}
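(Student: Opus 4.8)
L'observation structurelle de départ, sur laquelle reposera toute la démonstration, est que la chaîne $( X_n )_{n\geq 1}$ et la marche $( S_n )_{n\geq 1}$ ne dépendent du point initial $x=(g,\overline{u})$ qu'à travers la direction $g\cdot\overline{u} \in \bb P(\bb R^d)$. En effet, pour tout $n\geq 1$, $X_n = \l( g_n, G_{n-1}g\cdot\overline{u} \r)$ et $S_n = \sum_{k=1}^n \rho(X_k) = \log\l( \norm{G_n w} \r)$, où $w$ est un représentant unitaire de $g\cdot\overline{u}$. Les lois de $(X_n, S_n)_{n\geq 1}$ sous $\bb P_{(g,\overline{u})}$ et sous $\bb P_{(\Id,\, g\cdot\overline{u})}$ coïncident donc, et l'on aura
\[
V((g,\overline{u}),y) = V((\Id,\, g\cdot\overline{u}),y), \qquad \forall (g,\overline{u}) \in \bb X,\; y \in \bb R.
\]
Comme $\tt N((\Id,\cdot)) = 2^{\theta}$, cette réduction permettra de remplacer $\tt N(x)$ par la constante $2^\theta$ dans toute borne issue du Théorème 2.2 faisant intervenir $\tt N$, d'où l'uniformité en $x$ recherchée.

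\textit{Point 1.} On partira des encadrements fournis par le Théorème 2.2, du type $(1-\delta)\max(y,0) - c_\delta\l( 1+\tt N(x) \r) \leq V(x,y) \leq (1+\delta)\max(y,0) + c_\delta\l( 1+\tt N(x) \r)$. En les appliquant au point réduit $x'=(\Id,\, g\cdot\overline{u})$ et en utilisant l'égalité ci-dessus, les termes $c_\delta(1+\tt N(x'))$ deviendront la constante $c_\delta(1+2^{\theta})$, ce qui est l'encadrement annoncé (quitte à renommer $c_\delta$). La pente exactement $1\pm\delta$ proviendra du centrage $\tbs\nu(\rho)=0$ (Proposition \ref{Canada5}) et du correcteur $\psi = \sum_{n\geq 1} \bf P^n\rho$ : ce dernier sera uniformément borné sur $\bb X$ car $\bf P\rho(g,\overline{u}) = \int_{\bb G}\rho(g_1, g\cdot\overline{u})\bs\mu(\dd g_1)$ ne dépend que de la direction $g\cdot\overline{u}$, donc est borné sur le compact $\bb P(\bb R^d)$, et le trou spectral (Proposition \ref{Canada2}) assurera la convergence uniforme de la série. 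La décomposition de martingale $y+S_n+\psi(X_n)$, jointe à l'intégrabilité de tous les moments polynomiaux de $\rho$ (conséquence de $\abs{\rho(x)}\leq\log N(g)$ et de \ref{HypoP1}), fournira le contrôle uniforme de l'excès à l'instant de sortie.

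\textit{Point 2.} On identifiera d'abord $\supp(V)$ à $\{V>0\}$ par continuité de $V$, puis on montrera $\{V>0\}=\scr D_\gamma'$. Pour l'inclusion $\{V>0\}\subseteq \scr D_\gamma'$, on supposera $(x,y)\notin\scr D_\gamma'$ : alors, pour tout $n$, presque sûrement sur $\{\tau_y>n\}$ on aura $y+S_n\leq\gamma$, d'où $\bb E_x\l( (y+S_n)\bbm 1_{\{\tau_y>n\}} \r) \leq \gamma\, \bb P_x(\tau_y>n)$. La marche étant centrée et non dégénérée (Proposition \ref{Canada5}), $\tau_y<+\infty$ presque sûrement, donc $\bb P_x(\tau_y>n)\to 0$ et $V(x,y)=0$. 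Pour la réciproque $\scr D_\gamma'\subseteq\{V>0\}$, si $(x,y)\in\scr D_\gamma'$ de témoin $n_0$ et $p:=\bb P_x(y+S_{n_0}>\gamma, \tau_y>n_0)>0$, l'harmonicité donnera
\[
V(x,y) = \bb E_x\l( V(X_{n_0}, y+S_{n_0})\bbm 1_{\{\tau_y>n_0\}} \r) \geq p\,\l( (1-\delta)\gamma - c_\delta \r),
\]
où l'on utilisera la minoration \emph{uniforme} du point 1. En posant $\gamma_0' := c_\delta/(1-\delta)$ (pour $\delta$ fixé, par exemple $\delta=1/2$), on obtiendra $V(x,y)>0$ dès que $\gamma>\gamma_0'$.

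Le point le plus délicat sera l'obtention des bornes uniformes du point 1 : il faudra d'une part justifier soigneusement que la fonction harmonique n'hérite que de la dépendance en la direction, et d'autre part produire la pente exacte $1\pm\delta$ via le correcteur borné $\psi$ et le contrôle uniforme de l'excès, ce dernier reposant crucialement sur le fait que $\rho$ possède tous ses moments polynomiaux sous \ref{HypoP1}. Le point 2 en découlera sans difficulté supplémentaire, l'uniformité en $x$ étant précisément ce qui permettra de minorer $\inf_{z>\gamma} V(\cdot,z)$ indépendamment du point d'arrivée $X_{n_0}$.
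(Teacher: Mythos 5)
Your proposal is correct in substance, but it organizes the proof differently from the paper, and the difference is worth recording. For Point 1, the paper stays entirely within points 2--3 of Théorème 2.2: it applies harmonicity once, $V(x,y)=\bb E_x\l( V(X_1,y+S_1)\,;\,\tau_y>1\r)$, and uses the bound of point 3 \emph{inside} the expectation; the resulting term $c_\delta\,\bb E_x\l(1+\abs{\rho(X_1)}+\tt N(X_1)\r)$ is uniform in $x$ because $\tt N(X_1)=\l(\norm{g_1}+\norm{g_1^{-1}}\r)^{\theta}$ does not depend on $x$, hence is bounded in expectation by $2^{\theta}\bb E\l(N(g_1)^{\theta}\r)<+\infty$ under P1. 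You exploit the same structural fact (the chain forgets the matrix component of $x$ after one step) but package it as an invariance of $V$, namely $V((g,\overline{u}),y)=V((\Id,g\cdot\overline{u}),y)$, and then evaluate point 3 at the reduced point where $\tt N\equiv 2^{\theta}$. This is legitimate and it pays off in Point 2: your minoration $V(x,y)\geq p\l((1-\delta)\gamma-c_\delta\r)$ on the witness event avoids the paper's detour through the stopping time $\zeta_{\gamma_0}'$, and your proof of $\supp(V)\subseteq\scr D_\gamma'$ (via $\bb E_x\l(y+S_n\,;\,\tau_y>n\r)\leq\gamma\,\bb P_x(\tau_y>n)\to 0$) is self-contained, whereas the paper imports point 3 of Proposition 8.8 of \cite{grama_conditioned_2016}.

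Two steps of yours need firming up, though neither is a fatal gap. First, the invariance of $V$ does not follow from ``the laws of $(X_n,S_n)_{n\geq 1}$ coincide'' alone: you must also know that $V$ is a functional of that law. This is true because Théorème 2.2 produces $V$ as $\lim_n\bb E_x\l(y+S_n\,;\,\tau_y>n\r)$, and you should cite that representation explicitly --- or, more economically, observe that harmonicity by itself yields the invariance, since $(X_1,y+S_1,\bbm 1_{\{\tau_y>1\}})$ has the same law under $\bb P_{(g,\overline{u})}$ as under $\bb P_{(\Id,\,g\cdot\overline{u})}$. The same limit representation is what your inclusion $\supp(V)\subseteq\scr D_\gamma'$ implicitly uses, together with $\tau_y<+\infty$ $\bb P_x$-a.s., which should be justified by Théorème 2.4 (i.e.\ point 2 of Proposition \ref{Macédoine3}) rather than by ``centered and non-degenerate'' alone. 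Second, the digression on the corrector $\psi=\sum_{n\geq 1}\bf P^n\rho$ and the martingale decomposition is superfluous: the slopes $1\pm\delta$ are exactly what point 3 of Théorème 2.2 already provides, so re-deriving them amounts to re-proving that theorem. Finally, $\supp(V)$ here simply means $\{V>0\}$ (this is how it is used in Propositions \ref{Macédoine1} and \ref{Macédoine2}), so no continuity argument is needed.
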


\begin{proof}
\textit{Point 1.} Par le point 2 du Théorème 2.2, 
pour tout $(x,y) \in \bb X \times \bb R$, on a $V(x,y) = \bb E_x \l( V(X_1,y+S_1) \,;\, \tau_y > 1 \r)$. Donc en utilisant le point 3 du Théorème 2.2, 
\begin{align*}
V(x,y) &\leq \bb E_x \l( (1+\delta)(y+S_1) + c_{\delta}(1+\tt N(X_1)) \,;\, \tau_y > 1 \r) \\
&\leq (1+\delta) \max(y,0) + c_{\delta} \bb E_x \l( 1+\abs{\rho(X_1)} + \tt N(X_1) \r).
\end{align*}
Par le point 1 de la Proposition \ref{Canada4},
\[
V(x,y) \leq (1+\delta) \max(y,0) + c_{\delta,\theta} \bb E \l( N(g_1)^{\theta} \r) \leq (1+\delta) \max(y,0) + c_{\delta,\theta}.
\]
On procède de façon similaire pour la minoration :
\begin{align*}
V(x,y) &\geq (1-\delta) \bb E_x \l( y+S_1 \,;\, y+S_1 > 0 \r) - c_{\delta} \bb E_x \l( 1+\tt N(X_1) \,;\, \tau_y > 1 \r) \\
&\geq (1-\delta) y + (1-\delta)\bb E_x \l( S_1 \r) - (1-\delta) \bb E_x \l( y+S_1 \,;\, y+S_1 \leq 0 \r) - c_{\delta,\theta} \\
&\geq (1-\delta) y + (1-\delta)\bb E_x \l( S_1 \r) - c_{\delta,\theta}.
\end{align*}
Puisque $V \geq 0$, on conclut que
\[
V(x,y) \geq (1-\delta) \max(y,0) - c_{\delta,\theta}
\]

\textit{Point 2.} En prenant $\delta = 1/2$ dans le point 1, il existe $\gamma_0' = 4c_{\delta} > 0$ telle que pour tout $(x,y) \in \bb X \times \bb R$,
\[
V(x,y) \geq \frac{y}{2} - \frac{\gamma_0'}{4}.
\]
On rappelle que, les ensembles $\scr D_{\gamma}$ sont définis avant le Théorème 2.2 
par
\[
\scr D_{\gamma} := \left\{ (x,y) \in \bb X \times \bb R : \exists n_0 \geq 1,\; \bb P_x \l( y+S_{n_0} > \gamma \l( 1+\tt N \left( X_{n_0} \r) \r) \,,\, \tau_y > n_0 \r) > 0 \right\}
\]
et donc puisque $\tt N \geq 0$, pour tout $\gamma > 0$, $\scr D_{\gamma} \subseteq \scr D_{\gamma}'$. Ainsi par le point 3 de la Proposition 8.8,  
on en déduit que pour tout $\gamma > 0$,
\begin{equation}
\label{Serbie}
\supp(V) \subseteq \scr D_{\gamma}'.
\end{equation}

D'autre part, pour tout $\gamma > 0$, posons $\zeta_{\gamma}' := \inf \left\{ k \geq 1 : \abs{y+S_k} > \gamma \right\}$ et considérons $(x,y) \in \scr D_{\gamma_0}'$. Par définition, il existe $n_0\geq 1$ tel que $\bb P_x \l( y+S_{n_0} > \gamma \,,\, \tau_y > n_0 \r)$. Alors, comme dans la démonstration du point 4 de la Proposition 8.8, 
\begin{align*}
V(x,y) &\geq \bb E_x \l( V(X_{n_0},y+S_{n_0}) \,;\, \tau_y > n_0 \,,\, \zeta_{\gamma_0}' \leq n_0 \r) \\
&\geq \frac{1}{2}\bb E_x \l( y+S_{\zeta_{\gamma_0}'} - \frac{\gamma_0'}{2} \,;\, \tau_y > \zeta_{\gamma_0}' \,,\, \zeta_{\gamma_0}' \leq n_0 \r) \\
&\geq \frac{\gamma_0'}{4} \bb P_x \l( \tau_y > \zeta_{\gamma_0}' \,,\, \zeta_{\gamma_0}' \leq n_0 \r) > 0.
\end{align*}
Donc $\scr D_{\gamma_0}' \subseteq \supp(V)$. Or il est facile de voir que que pour tout $\gamma_1 \geq \gamma_2$, $\scr D_{\gamma_1}' \subseteq \scr D_{\gamma_2}'$. Ainsi pour tout $\gamma \geq \gamma_0$, $\scr D_{\gamma}' \subseteq \supp(V)$ ce qui, avec \eqref{Serbie}, conclut la preuve.
\end{proof}

\begin{proposition}
\label{Macédoine2}
Supposons \ref{HypoP1}-\ref{HypoP4}. Alors pour tout $(x,y) \notin \supp(V)$,
\[
\bb P_x \l( \tau_y > n \r) \leq c\e^{-cn}.
\]
\end{proposition}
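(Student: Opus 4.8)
The plan is to combine the description of $\supp(V)$ from Proposition \ref{Macédoine1} with the non-degeneracy of Proposition \ref{Canada5}: lying outside the support confines the trajectory to a bounded strip on the survival event, and a non-degenerate walk cannot remain in such a strip for long.

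Fix $\gamma > \gamma_0'$, so that Proposition \ref{Macédoine1} gives $\supp(V) = \scr D_\gamma'$. Since $(x,y) \notin \scr D_\gamma'$, the definition of $\scr D_\gamma'$ yields, for every $k \ge 1$,
\[
\bb P_x\l( y + S_k > \gamma \,,\, \tau_y > k \r) = 0.
\]
As $\{ \tau_y > n \} \subseteq \{ \tau_y > k \}$ for $k \le n$, the finite union over $k \le n$ shows that on $\{\tau_y > n\}$ one has almost surely $0 < y + S_k \le \gamma$ for every $1 \le k \le n$; conversely this event is contained in $\{\tau_y > n\}$. Hence, writing $A_j := \{ 0 < y + S_k \le \gamma \,,\, \forall\, 1 \le k \le j \}$, we get $\bb P_x( \tau_y > n ) = \bb P_x( A_n )$, and it remains to prove that confinement in the strip $(0,\gamma]$ has exponentially small probability.

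I would then run a block argument. Fix an integer $m \ge 1$ and apply the Markov property at time $(j-1)m$. On $A_{(j-1)m}$ the height $z := y + S_{(j-1)m}$ lies in $(0,\gamma]$, and remaining in $(0,\gamma]$ over the next $m$ steps forces the fresh increment $S_{(j-1)m+m} - S_{(j-1)m}$ to lie in $(-z,\gamma-z] \subseteq [-\gamma,\gamma]$. This increment depends only on the direction $\overline w$ of $X_{(j-1)m}$ and on the independent matrices $g_{(j-1)m+1},\dots,g_{(j-1)m+m}$, and is distributed as the $m$-step increment $S_m$ of the walk issued from $\overline w$. Consequently
\[
\bb P_x\l( A_{jm} \r) \le q \,\bb P_x\l( A_{(j-1)m} \r), \qquad q := \sup_{\overline w \in \bb P(\bb R^d)} \bb P_{\overline w}\l( \abs{S_m} \le \gamma \r).
\]
Iterating gives $\bb P_x(A_{nm}) \le q^{\,n}$, and since $\bb P_x(\tau_y > N) \le \bb P_x(\tau_y > m\lfloor N/m\rfloor) = \bb P_x(A_{m\lfloor N/m\rfloor}) \le q^{\lfloor N/m\rfloor}$, this is the desired exponential bound as soon as $q < 1$.

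The main obstacle is thus the uniform anti-concentration estimate $q < 1$ for a suitable $m$, and this is exactly where $\sigma^2 > 0$ (Proposition \ref{Canada5}) is decisive: a confined increment would keep $\Var_{\overline w}(S_m)$ bounded, whereas $\Var_{\overline w}(S_m) \sim \sigma^2 m$. Quantitatively I would argue through the perturbed operator $\bf P_t$ of Propositions \ref{Canada2}--\ref{Canada3}, using the identity $\bb E_{\overline w}(\e^{itS_m}) = \bf P_t^m e$ together with Esseen's concentration inequality
\[
\bb P_{\overline w}\l( \abs{S_m} \le \gamma \r) \le c\,\gamma \int_{-1/\gamma}^{1/\gamma} \abs{ \bb E_{\overline w}\l( \e^{itS_m} \r) } \dd t.
\]
For small $t$ the operator $\bf P_t$ has a dominant simple eigenvalue $\lambda(t) = 1 - \tfrac12 \sigma^2 t^2 + o(t^2)$ with $\abs{\lambda(t)} < 1$, so the spectral decomposition gives $\abs{\bb E_{\overline w}(\e^{itS_m})} \le C\abs{\lambda(t)}^m + Cr^m$ uniformly in $\overline w$, whose contribution to the integral is $O(1/\sqrt m)$; for $t$ bounded away from $0$ the spectral radius of $\bf P_t$ is $<1$ (a non-arithmeticity property of $\rho$ holding under \ref{HypoP1}--\ref{HypoP3}), so that range contributes a geometrically small amount. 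Hence $q \to 0$ as $m \to +\infty$, and choosing $m$ large enough to ensure $q < 1$ completes the proof. Alternatively one may invoke directly the uniform central limit theorem of Le Page \cite{page_theoremes_1982}, under which $\bb P_{\overline w}(\abs{S_m} \le \gamma) = \bb P_{\overline w}(\abs{S_m}/\sqrt m \le \gamma/\sqrt m)$ tends to $0$ uniformly in $\overline w$, yielding the same conclusion.
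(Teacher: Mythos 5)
Your proof (in its central-limit-theorem variant) is correct, but it follows a genuinely different route from the paper's. The paper disposes of the statement in three lines by importing the analytic work from the companion paper \cite{grama_limit_2018-1}: the harmonicity of $V$ (point 2 of its Theorem 2.2) gives $0 = V(x,y) = \bb E_x ( V(X_1,y+S_1) \,;\, \tau_y > 1 )$, hence $(X_1,y+S_1) \notin \supp(V)$ almost surely on $\{\tau_y > 1\}$, and the exponential bound of point 2 of its Theorem 2.3, applied through the Markov property at time $1$, yields $\bb P_x(\tau_y > n) \leq c\e^{-c(n-1)} \bb E_x(1+\tt N(X_1) \,;\, \tau_y > 1) \leq c\e^{-cn}$, hypothesis \ref{HypoP1} serving to integrate the weight $1+\tt N(X_1)$ and make the constant uniform in $(x,y)$. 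You instead stay inside the present paper: the identification $\supp(V) = \scr D_{\gamma}'$ of Proposition \ref{Macédoine1} (whose proof does not use Proposition \ref{Macédoine2}, so there is no circularity) converts $(x,y) \notin \supp(V)$ into confinement of the walk in the strip $(0,\gamma]$ on $\{\tau_y > n\}$, and a block argument plus a uniform anti-concentration estimate, resting on $\sigma^2 > 0$ from Proposition \ref{Canada5}, kills the confinement event exponentially fast. Your route is more self-contained and makes transparent \emph{why} the decay is exponential (a non-degenerate walk cannot stay in a bounded strip); the paper's route is shorter because the companion theorem already encodes exactly that mechanism.

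Two caveats. First, of your two proposed arguments for $q = \sup_{\overline w} \bb P_{\overline w}(\abs{S_m} \leq \gamma) < 1$, only the second is justified at the level of this paper: Le Page's central limit theorem \cite{page_theoremes_1982} is uniform in the starting direction under \ref{HypoP1}--\ref{HypoP3} and, combined with \ref{HypoP4} (centering) and Proposition \ref{Canada5} ($\sigma^2>0$), gives $q \to 0$ as $m \to +\infty$. Your first argument (Esseen plus spectral decomposition of $\bf P_t$) relies on two facts established nowhere in this paper: the eigenvalue expansion $\lambda(t) = 1 - \tfrac{1}{2}\sigma^2 t^2 + o(t^2)$ (Proposition \ref{Canada3} only provides uniform boundedness of $\bf P_t^n$, not a perturbation expansion), and, for $t$ bounded away from $0$, a strict spectral-radius bound, i.e.\ non-arithmeticity of the cocycle $\rho$ --- true for matrix products under \ref{HypoP2}--\ref{HypoP3}, but neither assumed in M1--M5 nor proved here; presented as the main argument this would be a genuine gap, so the CLT variant should carry the proof. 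Second, a small indexing point: in the first block $y$ itself need not belong to $(0,\gamma]$, so the iteration gives $\bb P_x(A_{nm}) \leq q^{n-1}$ rather than $q^{n}$, which is harmless for the conclusion and still yields a bound uniform in $(x,y) \notin \supp(V)$, as in the paper.
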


\begin{proof} Soit $(x,y) \notin \supp(V)$. Par le point 2 du Théorème 2.2, 
$0 = V(x,y) = \bb E_x \l( V(X_1,y+S_1) \,;\, \tau_y > 1 \r)$. Donc sur l'évènement $\{ \tau_y > 1 \}$, on a $(X_1,y+S_1) \notin \supp (V)$. Par conséquent, en utilisant la propriété de Markov et le point 2 du Théorème 2.3,
pour tout $n \geq 2$,
\begin{align*}
\bb P_x \l( \tau_y > n \r) \leq c \e^{-c(n-1)} \bb E_x \l( 1 + \tt N(X_1) \,;\, \tau_y > 1 \r) \leq c \e^{-cn} \bb E \l( N(g_1)^{\theta} \r).
\end{align*}
\end{proof}

Avec les mêmes idées que celles présentées dans les preuves des Propositions \ref{Macédoine1} et \ref{Macédoine2} et en utilisant les Théorèmes 2.4 et 2.5, 
on obtient la proposition suivante.

\begin{proposition}
\label{Macédoine3}
Supposons \ref{HypoP1}-\ref{HypoP4}.
\begin{enumerate}[ref=\arabic*, leftmargin=*, label=\arabic*.]
\item Il existe $\ee_0' > 0$ tel que pour tout $\ee' \in(0,\ee_0')$, $n\geq 1$ et $(x,y) \notin \supp(V)$,
	\[
	\abs{\bb P_x \left( \tau_y > n \right) - \frac{2V(x,y)}{\sqrt{2\pi n} \sigma}} \leq c_{\ee'}\frac{\l( 1+\max(y,0) \r)^2}{n^{1/2+\ee'}}.
	\]
	\item Pour tout $(x,y) \in \bb X \times \bb R$ et $n\geq 1$,
	\[
	\bb P_x \left( \tau_y > n \right) \leq c\frac{ 1 + \max(y,0) }{\sqrt{n}}.
	\]
	\item Il existe $\ee_0' >0$ tel que pour tout $\ee' \in (0,\ee_0')$, $n\geq 1$, $t_0 > 0$, $t\in [ 0, t_0 ]$ et $(x,y)\in \bb X \times \bb R$,
\[
\abs{\bb P_x \left( y+S_n \leq t \sqrt{n} \,,\, \tau_y > n \right) -\frac{2V(x,y)}{\sqrt{2\pi n}\sigma} \mathbf \Phi^+\left(\frac{t}{\sigma}\right)} \leq c_{\ee',t_0}\frac{\l( 1+\max(y,0) \r)^2}{n^{1/2+\ee'}}.
\]
\end{enumerate}
\end{proposition}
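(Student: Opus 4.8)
Le plan serait de reprendre l'analyse à un pas développée dans les preuves des Propositions \ref{Macédoine1} et \ref{Macédoine2}, en l'appliquant cette fois aux Théorèmes 2.4 et 2.5 de \cite{grama_limit_2018-1}. Ces théorèmes fournissent les asymptotiques et la majoration voulues, mais avec un terme d'erreur faisant intervenir, au point de départ $x=(g,\overline{u})$, un facteur du type $\l( 1+\tt N(x)+\max(y,0) \r)^2$ (respectivement $\l( 1+\tt N(x)+\max(y,0) \r)$ pour la borne supérieure). Comme $\tt N(x) = \l( \norm{g} + \norm{g^{-1}} \r)^{\theta}$ n'est pas borné sur $\bb X$, ces bornes ne sont pas uniformes en $x$. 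L'objet de la décomposition à un pas est précisément de remplacer $\tt N(x)$ par $\tt N(X_1)$ sous l'espérance, puis d'intégrer ce facteur grâce à \ref{HypoP1}, tout en reconstituant le terme principal par harmonicité de $V$.

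Pour le point 1, je fixerais $(x,y)\notin \supp(V)$ et j'écrirais, par la propriété de Markov, $\bb P_x\l( \tau_y > n \r) = \bb E_x\l( \bb P_{X_1}\l( \tau_{y+S_1} > n-1 \r) \,;\, \tau_y > 1 \r)$. Puisque $V(x,y)=0$ et $V\geq 0$, le point 2 du Théorème 2.2 donne $V(X_1,y+S_1)=0$ presque sûrement sur $\{\tau_y>1\}$, donc $(X_1,y+S_1)\notin\supp(V)$ et le Théorème 2.4 s'applique en ce point avec $n-1$ pas. En utilisant l'harmonicité $V(x,y)=\bb E_x\l( V(X_1,y+S_1)\,;\,\tau_y>1 \r)$, le terme principal se reconstitue exactement car la normalisation ne dépend pas de $X_1$ :
\[
\bb E_x\l( \frac{2V(X_1,y+S_1)}{\sqrt{2\pi(n-1)}\sigma}\,;\,\tau_y>1 \r) = \frac{2V(x,y)}{\sqrt{2\pi(n-1)}\sigma}.
\]
Le reste est alors majoré par $c_{\ee'}(n-1)^{-1/2-\ee'} \bb E_x\l( \l( 1+\tt N(X_1)+\max(y+S_1,0) \r)^2 \r)$. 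En écrivant $\max(y+S_1,0)\leq \max(y,0)+\abs{\rho(X_1)}$ (rappelons $S_1=\rho(X_1)$) et en développant le carré, les termes ne dépendant pas de $y$ s'intègrent en une constante : le choix $\theta=3\delta_0/8$ assure $2\theta<\delta_0$, et $\abs{\rho(X_1)}\leq \log N(g_1)\leq c_{\eta}N(g_1)^{\eta}$ pour tout $\eta>0$, de sorte que $\bb E\l( \l(1+\tt N(X_1)+\abs{\rho(X_1)}\r)^2 \r)<+\infty$ par \ref{HypoP1}. Il reste une majoration en $c_{\ee'}\l(1+\max(y,0)\r)^2 n^{-1/2-\ee'}$. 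Enfin la correction $\sqrt{n-1}\to\sqrt{n}$ s'absorbe via $\abs{(n-1)^{-1/2}-n^{-1/2}}\leq c\,n^{-3/2}$ et la borne $V(x,y)\leq c\l(1+\max(y,0)\r)$ du point 1 de la Proposition \ref{Macédoine1}. Le point 2 se traite identiquement, mais sans invoquer l'annulation de $V$ : pour $(x,y)\in\bb X\times\bb R$ quelconque, on applique directement la borne supérieure du Théorème 2.4 en $(X_1,y+S_1)$ après conditionnement, ce qui donne $\bb P_x\l( \tau_y>n \r)\leq c\,\bb E_x\l( 1+\tt N(X_1)+\max(y+S_1,0) \r)/\sqrt{n-1}\leq c\l(1+\max(y,0)\r)/\sqrt{n}$.

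Pour le point 3, la même décomposition s'applique au Théorème 2.5, la seule différence étant le traitement du facteur gaussien. L'évènement $\{y+S_n\leq t\sqrt{n}\}$ devient $\{(y+S_1)+S'_{n-1}\leq t\sqrt{n}\}$ pour la marche $S'$ issue de $X_1$, et l'on applique le Théorème 2.5 en $(X_1,y+S_1)$ avec le seuil $t\sqrt{n}=t'\sqrt{n-1}$ où $t'=t\sqrt{n/(n-1)}$. Comme $\mathbf\Phi^+$ est lipschitzienne et $\abs{t'-t}\leq c\,t_0/n$, la distorsion $\abs{\mathbf\Phi^+(t'/\sigma)-\mathbf\Phi^+(t/\sigma)}\leq c\,t_0/n$, ce qui, multiplié par $V(x,y)/\sqrt{n}$, reste absorbé dans le terme d'erreur. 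L'harmonicité reconstitue à nouveau le terme principal $\tfrac{2V(x,y)}{\sqrt{2\pi n}\sigma}\mathbf\Phi^+(t/\sigma)$, et l'intégration de $\tt N(X_1)$ fournit la borne annoncée, uniformément en $t\in[0,t_0]$.

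Le point le plus délicat sera le contrôle simultané des décalages de paramètres introduits par l'analyse à un pas : le passage de $n-1$ à $n$ dans la normalisation, et surtout, pour le point 3, le passage du seuil $t\sqrt{n}$ au seuil effectif $t'\sqrt{n-1}$ avec la distorsion correspondante de $\mathbf\Phi^+$, tout en maintenant chaque terme résiduel dominé par $\l(1+\max(y,0)\r)^2 n^{-1/2-\ee'}$. Il faudra en particulier vérifier avec soin l'intégrabilité du produit $\l(1+\tt N(X_1)\r)\l(1+\max(y+S_1,0)\r)$ et de son carré, qui repose de façon essentielle sur le choix $\theta=3\delta_0/8$ garantissant $2\theta<\delta_0$ dans \ref{HypoP1}.
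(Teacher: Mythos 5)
Votre proposition est correcte et suit exactement l'approche de l'article, lequel ne donne d'ailleurs aucune démonstration détaillée mais indique seulement que le résultat s'obtient avec les mêmes idées que celles des Propositions \ref{Macédoine1} et \ref{Macédoine2} combinées aux Théorèmes 2.4 et 2.5. Votre décomposition à un pas par la propriété de Markov, l'usage de l'harmonicité de $V$ pour reconstituer le terme principal et l'intégration de $\tt N(X_1)$ grâce à \ref{HypoP1} sont précisément ces idées, que vous explicitez de surcroît (décalages $n-1 \to n$ et $t \to t'$, contrôle de $\mathbf \Phi^+$).
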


\bibliographystyle{plain}
\bibliography{biblio7}

\begin{thebibliography}{1}

\bibitem{grama_limit_2018-1}
I.~Grama, R.~Lauvergnat, and E.~Le Page.
\newblock Limit theorems for {Markov} walks conditioned to stay positive under
  a spectral gap assumption.
\newblock {\em The Annals of Probability}, pages 1807--1877, 2018.

\bibitem{grama_conditioned_2016}
I.~Grama, \'E.~Le Page, and M.~Peign\'e.
\newblock Conditioned limit theorems for products of random matrices.
\newblock {\em Probability Theory and Related Fields}, pages 1--39, 2016.

\bibitem{tulcea_theorie_1950}
C.~T. Ionescu~Tulcea and G.~Marinescu.
\newblock Th\'eorie ergodique pour des classes d'op\'erations non
  compl\`etement continues.
\newblock {\em Annals of Mathematics}, 52(1):140--147, 1950.

\bibitem{page_theoremes_1982}
E.~Le~Page.
\newblock Th\'eor\`emes limites pour les produits de matrices al\'eatoires.
\newblock In {\em Probability {Measures} on {Groups}}, pages 258--303.
  Springer, Berlin, Heidelberg, 1982.
\newblock DOI: 10.1007/BFb0093229.

\bibitem{norman1972markov}
M.~F. Norman.
\newblock {\em Markov processes and learning models}, volume~84.
\newblock 1972.

\end{thebibliography}

\end{document}